\newcommand{\D}{\mathbb{D}}
\newcommand{\lam}{\lambda}
\newcommand{\bcdot}{\boldsymbol{\cdot}}
\newcommand{\nat}{\mathbb{N}}
\newcommand{\lrarw}{\longrightarrow}
\newcommand{\Cbpsi}{\mathcal{C}_b (\Psi)}
\newcommand\intf[4]{\genfrac{#1}{#2}{0.5pt}{0}{#3}{#4}}
\definecolor{DarkBlue}{rgb}{0.1,0.1,0.5}
\definecolor{DarkGreen}{rgb}{0.1,0.5,0.1}
\renewcommand*{\backref}[1]{}
\renewcommand*{\backrefalt}[4]{%
    \ifcase #1 (Not cited.)%
    \or        (Cited on page~#2)%
    \else      (Cited on pages~#2)%
    \fi}
\newcommand{\extra}[1]{}
\newtheorem{theorem}{Theorem}
\newtheorem{definition}[theorem]{Definition}
\newtheorem{lemma}[theorem]{Lemma}
\newtheorem{proposition}[theorem]{Proposition}
\theoremstyle{remark}
\def\squareforqed{\hbox{\rlap{$\sqcap$}$\sqcup$}}
\def\qed{\ifmmode\squareforqed\else{\unskip\nobreak\hfil
\penalty50\hskip1em\null\nobreak\hfil\squareforqed
\parfillskip=0pt\finalhyphendemerits=0\endgraf}\fi}
\def\endenv{\ifmmode\;\else{\unskip\nobreak\hfil
\penalty50\hskip1em\null\nobreak\hfil\;
\parfillskip=0pt\finalhyphendemerits=0\endgraf}\fi}
\renewenvironment{proof}{\noindent \textbf{{Proof~} }}{\qed\medskip}
\newenvironment{proof+}[1]{\noindent \textbf{{Proof #1~} }}{\qed\medskip}
\mathchardef\ordinarycolon\mathcode`\:
\def\vcentcolon{\mathrel{\mathop\ordinarycolon}}
\newcommand{\nc}{\newcommand}
\nc{\barA}{\overline{A}}
\nc{\barB}{\overline{B}}
\nc{\barC}{\overline{C}}
\nc{\barD}{\overline{D}}
\nc{\barR}{\overline{R}}
\nc{\barX}{\overline{X}}
\nc{\barY}{\overline{Y}}
\nc{\barU}{\overline{U}}
\nc{\ALG}{\textsc{Alg}}
\newcommand{\A}{\mathscr{A}}
\newcommand{\B}{\mathscr{B}}
\newcommand{\X}{\mathscr{X}}
\newcommand{\E}{\mathcal{E}}
\newcommand{\U}{\mathscr{U}}
\newcommand{\Y}{\mathscr{Y}}
\newcommand{\I}{\mathbb{I}}
\newcommand{\K}{\mathcal{K}}
\newcommand{\SA}{\mathscr{S\mspace{-5mu}A}}
\newcommand{\HH}{\mathscr{H}}
\newcommand{\GammaDomainRange}{\Gamma: \Omega \times \Omega\rightarrow B(\mathcal{C}_b(\Psi), B(\Y))}
\providecommand{\keywords}[1]{\textbf{\textit{Keywords---}} #1}
\begin{document}
	%\sloppy
	
	\title{Interpolating sequences for the Banach algebras generated by a class of test functions}
	\author{Anindya Biswas \thanks{Department of Mathematics, Indian Institute of Science, Bangalore 560012, India. e-mail: anindyab@iisc.ac.in} 
	\and Vikramjeet Singh Chandel \thanks{Harish-Chandra Research Institute, Prayagraj (Allahabad), 211019, India. e-mail: vikramjeetchandel@hri.
	  res.in}}

	\date{}
	\maketitle

	\begin{abstract}
	Given a domain $\Omega$ in $\mathbb{C}^n$ and a collection of test functions $\Psi$ on
	$\Omega$, we consider the complex-valued $\Psi$-Schur-Agler class associated to the pair
	$(\Omega,\,\Psi)$. In this
	article, we characterize interpolating sequences for the associated Banach algebra of which the
	$\Psi$-Schur-Agler class is the closed unit ball. When $\Omega$ is the unit disc $\D$ in the 
	complex plane $\mathbb{C}$ and the class of test function includes only the identity function on $\D$,
	the aforementioned algebra is the algebra of bounded holomorphic functions on $\D$ and in this case,
	our characterization reduces to the well known result by Carleson. Furthermore, we present several other
	cases of the pair $(\Omega,\,\Psi)$, where our main result could be applied to characterize interpolating 
	sequences which also show the efficacy of our main result. 
		\noindent

\end{abstract}

\keywords{Interpolating sequence, Test functions, Schur-Agler class, Grammian}

\section{Introduction.}
\subsection{Interpolating sequences: An overview}
Let $\Omega$ be a bounded domain in $\mathbb{C}^n$. Let $\A$ be a Banach algebra of bounded functions on $\Omega$ with the norm 
$||\bcdot||_{\A}$ and having the property 
that $||f||_{\A}\geq ||f||_{\infty}:=\sup_{z\in\Omega}|f(z)|$ for every $f\in \A$. In what follows, $l^{\infty}(\nat)$ will denote the Banach algebra of 
all bounded sequences with sup-norm. Given a sequence $\{z_i:i\in \mathbb{N}\}\subset\Omega$, we consider the following linear map:
\begin{equation}\label{E:eval-L}
L:\big(\A,\,||\bcdot||_{A}\big)\lrarw l^{\infty}(\nat), \ \ \ \text{defined by $L(\phi):=\{\phi(z_i)\}$ for all $\phi\in\A$}.
\end{equation}
Observe that $\sup_{i\in\nat}|\phi(z_i)|\leq ||\phi||_{\infty}\leq ||\phi||_{\A}$, hence $L$ above is a bounded linear operator on $\A$. 
Consider the following abstract interpolation problem:

\begin{itemize}
\item[(IS)] Given a Banach algebra $(\A,\,||\bcdot||_{\A})$ of bounded functions on $\Omega$ with the property that $||\phi||_{\A}\geq ||\phi||_{\infty}$ for every $\phi\in \A$. Characterize all those sequences $\{z_i:i\in \mathbb{N}\}$ in $\Omega$ for which the bounded linear map $L$, as defined above, is a surjective map. 
\end{itemize}

\noindent{A sequence $\{z_i:i\in \mathbb{N}\}\subset\Omega$ for which the map $L$ is surjective will be called an \textit{interpolating sequence for the algebra $\A$}.
\smallskip

Note that if $\{z_i:i\in \mathbb{N}\}$ is an interpolating sequence for $\A$ then\,---\,from the open mapping theorem\,---\,there exists a $\delta>0$ such that $l^{\infty}_1(\nat)$:=the closed unit ball of $l^{\infty}(\nat)$, is contained in $L(\delta\,\A_1)$, where $\A_1$ denotes the closed unit ball of $\A$ in its norm. The smallest of such a $\delta$ is called
\textit{the constant of interpolation} for the interpolating sequence $\{z_i:i\in \mathbb{N}\}\subset\Omega$. 
This, in particular, implies that for each $i\in\nat$, there exists $\phi_i\in\A$ with $||\phi_i||_{\A}\leq M$, such that $\phi_i(z_j)=\delta_{i,j}$, where $\delta_{i,j}$ is the Kronecker symbol and $M$ is the constant of
interpolation associated to $\{z_i:i\in \mathbb{N}\}$. A sequences $\{z_i:i\in \mathbb{N}\}$ for which there exists a sequence $\{\phi_i\in\A:i\in \mathbb{N}\}$ that is uniformly bounded and has 
the property that $\phi_i(z_j)=\delta_{i,j}$ are called \textit{strongly separated sequences}. So every interpolating sequence $\{z_i:i\in \mathbb{N}\}$ for $\A$ is strongly separated. Given a sequence $\{z_i:i\in \mathbb{N}\}$ we shall call it is {\em weakly separated by $\A$} if there exists $R>0$ such that for each pair
$i\neq j$ there exists $\phi_{i,j}\in\A$ with $||\phi_{i,j}||_{\A}\leq R$ such that $\phi_{i,j}(z_i)=1$ and $\phi_{i,j}(z_j)=0$.
\smallskip 

The origins of the problem (IS) lies in the case when $\Omega=\D$, where $\D$ denotes the open unit disc in the
complex plane centered at $0$, and with $\A=H^{\infty}(\D)$:= the bounded holomorphic functions in the unit disc with the sup-norm.
Carleson in $1958$ proved the following theorem.% characterizing interpolating sequences for the algebra $H^{\infty}(\D)$.
\smallskip

\noindent{\bf Result~1. (Carleson, \cite{LC1958})} Let $\{\lam_i:i\in \mathbb{N}\}\subset\D$ be a sequence in $\D$. Then the following are equivalent.
\begin{enumerate}
\item $\{\lam_i:i\in \mathbb{N}\}$ is an interpolating sequence for $H^{\infty}(\D)$. 
\item $\{\lam_i:i\in \mathbb{N}\}$ is weakly separated and the atomic measure $\sum_{i\in\nat}\,\big(1-|\lam_i|^2\big)\,\delta_i$ is a Carleson measure for the Hardy 
space $H^2(\D)$.
\item $\{\lam_i:i\in \mathbb{N}\}$ is strongly separated.
\end{enumerate}
The reader is referred to \cite[Chapter~9]{A-M} for the definition of Carleson measure. What is essential here is that the Carleson measure condition can be equivalently stated in terms of the boundedness of the Grammian operator on $l^2$ corresponding to the Szego kernel on $\mathbb{D}$. For this purpose
let us introduce the Grammian associated 
with a positive kernel $k$ on $\Omega$. See Section~\ref{S:prelims} for the definition of a positive kernel $k$ and the reproducing kernel Hilbert space $\mathcal{H}_k$ that is associated to it.
Given a positive kernel $k$ on $\Omega$ and a sequence $\{z_i:i\in \mathbb{N}\}\subset\Omega$, let us denote by 
$k_i$ the kernel function at $z_i$, i.e., $k(\bcdot,z_i)$ and write 
$k_{i,j}:=\langle k_j,\,k_i\rangle$. Let $g_i:=k_i/||k_i||$ be the normalized kernel functions. The \textit{Grammian associated to the sequence $\{z_i:i\in \mathbb{N}\}$}
is the infinite matrix $G$ given by 
\[
G_{i,j}:=\langle g_j,\,g_i\rangle=\intf{}{}{k_{i,j}}{||k_i||\,||k_j||}.
\]

It is a fact that the Grammian associated to a sequence $\{z_i:i\in \mathbb{N}\}$ is bounded on $l^2$ if and only if the measure
$\sum_{i=1}^n ||k_i||^{-2}\,\delta_i$ is a Carleson measure for $\mathcal{H}_k$; see 
\cite[Proposition~9.5]{A-M}. The Hardy space on the unit disk, $H^2(\D)$, is a reproducing kernel
Hilbert space with the kernel $k$ being the Szeg\H{o} kernel. So the Carleson measure condition in the above result is equivalent to boundedness
of the Grammian matrix associated to the Szeg\H{o} kernel and the sequence $\{\lam_i:i\in \mathbb{N}\}$ above.
\smallskip

Shapiro--Shield in \cite{S-S} gave an alternative proof of Carleson's result by replacing the notions of Carleson measure condition and strong sepration
by conditions on the the Grammian matrix associated to a sequence. They also 
considered interpolating sequences for many other holomorphic function spaces on the unit disc. 
\smallskip

An important case of the problem (IS) is when $\A$ is the multiplier algebra, denoted by $Mult(\mathcal{H}_k)$,
of a reproducing kernel Hilbert space 
$\mathcal{H}_k$ associated to a kernel $k$, together with the multiplier norm. This was initiated by Marshall--Sundberg \cite{MS94} and by C.\,Bishop \cite{Bis94}. They also introduced a notion of interpolating sequences for the Hilbert space $\mathcal{H}_k$ and observed that the set of interpolating sequences for multiplier algebras
is contained in the set of interpolating sequences for the Hilbert space $\mathcal{H}_k$. Moreover, if the kernel
satisfies the Pick property, then the two notions of interpolating sequences coincide;
see \cite[Theorem~9.19]{A-M}. 
This is important since interpolating sequences for separable Hilbert spaces are exactly
those for which the Grammian is bounded and bounded from below.
\smallskip

A well studied class of positive 
kernels is the family of complete Nevanlinna--Pick kernels (see \cite{A-M} for the definition) that satisfy a more
stronger form of Pick property. 
A characterization of the interpolating sequences 
for the multiplier algebra for this class of kernels is now completely known. 
\smallskip

\noindent{\bf Result~2.} Let $\{\lam_i:i\in \mathbb{N}\}\subset\Omega$ be a sequence and let $k$ be an irreducible complete Nevanlinna--Pick kernel. 
Then the following are equivalent:
\begin{itemize}
\item[(IM)] the sequence is interpolating for $Mult(\mathcal{H}_k)$,
\item[(IH)] the sequence is interpolating for $\mathcal{H}_k$,
\item[(S+C)] the sequence is weakly separated and the Grammian associated to the sequences\\ is bounded.
%\item[(SS)] the sequence is strongly seprated with respect to the algebra $Mult(\mathcal{H}_k)$. 
\end{itemize}
As mentioned before the equivalence of (IM) and (IH) above was established by
Marshall-Sundberg and by Bishop. The implication (IH)$\implies$(S+C) holds, in general, for any reproducing kernel Hilbert space; see e.g. \cite{A-M} or \cite{KS}. The implication (S+C)$\implies$(IM) for irreducible complete Nevanlinna-Pick kernels is established in a recent article by
Aleman-Hartz--McCarthy--Richter \cite{A-H-M-R}, where they applied Marcus--Spielman--Srivastava theorem, a path-breaking result that established the Kadison-Singer conjecture. 
\smallskip

The condition (S+C) implies strong separation with respect to $Mult(\mathcal{H}_k)$ for kernels
having Pick property; see \cite[Theorem~9.43]{A-M}. On the other hand, both Marshall–Sundberg \cite{MS94} and Bishop \cite{Bis94} have shown that strong separation does not imply (IM) in the case of the Dirichlet space of the unit disc.
\smallskip

\subsection{Test functions}\label{test}

In this article, we shall address the problem (IS) with $\A$ being those Banach algebras of bounded functions that are obtained by taking intersections of multiplier algebras of certain reproducing kernel Hilbert spaces associated with a class of test functions.
These are algebras which are not necessarily multiplier algebras
of a reproducing kernel Hilbert spaces and were first introduced by Jim Agler.
%The reader is referred to the introduction of the article \cite{D-M} by Drtischel--McCullough for motivations behind introducing these algebras. 
%We shall begin with defining test functions and then introducing these Banach algebras. 
\smallskip

Let $\Omega$ be a bounded domain in $\mathbb{C}^N$ and let $\Psi$ be a family of functions on $\Omega$. 
We say $\Psi$ is a collection of \textit{ test functions} on $\Omega$ if the following conditions hold:
\begin{enumerate}
	\item sup $\{|\psi(x)|:\psi\in \Psi\}<1$ for each $x\in \Omega$.
	\item For each finite subset $F$ of $\Omega$, the collection $\{\psi |_F:\psi\in \Psi\}$ together with unity generates the algebra of all
	 $\mathbb{C}$-valued functions on $F$.
\end{enumerate}
The second condition is not essential part of the definition, but it makes some situations simpler (see \cite{B_H} and \cite{D-M}).
The collection $\Psi$ is a natural topological subspace of $\overline{\mathbb{D}}^\Omega$ equipped with the product topology. For every
$x\in \Omega$, there is an element $E(x)$ in $\mathcal{C}_b(\Psi)$, {\em the $C^*$-algebra of all bounded functions on} $\Psi$, such that
$E(x)(\psi)=\psi(x)$. Clearly,
\begin{math}
\| E(x)\|= \text{sup}_{\psi\in \Psi} |\psi(x)|<1
\end{math}
for each $x\in \Omega$. The functions $E(x)$ will be used at several places in this paper.
%Define $\overline{\mathbb{D}}_X$
\smallskip
%A positive kernel $k$ on a set $\Omega$ is a function $k:\Omega\times \Omega \rightarrow \mathbb{C}$ such that for any $n\geq 1$, any $n$
%points $x_1,\ldots,x_n$ in $\Omega$ and any $n$ complex numbers $c_1,\ldots, c_n$, we have
%\begin{align*}
%\sum_{i=1}^{n}\sum_{j=1}^{n}\overline{c_i}c_j k(x_i,x_j)\geq 0.
%\end{align*}
%Given a positive kernel $k$ on $\Omega$, one could construct a reproducing kernel Hilbert space $\mathcal{H}_k$ with reproducing kernel $k$ on $\Omega$
%in the following way: first consider the vector space of $\mathbb{C}$-valued functions $S:=\big\{\sum_{i=1}^n\,a_i\,k(\bcdot,\,x_i)\,:\,a_i\in\mathbb{C},\,
%x_i\in\Omega,\,n\in\nat\big\}$ with the innner-product
%\[
%\Big{\langle}\sum_{i=1}^n\,a_i\,k(\bcdot,\,x_i),\,\sum_{j=1}^m\,b_j\,k(\bcdot,\,y_j)\Big{\rangle}:=
%\sum_{i=1}^n\sum_{j=1}^m a_i\,\overline{b_j}\,k(y_j, x_i). 
%\]
%The above defines an inner-product on $S$, and we consider the closure of $S$ under the above inner-product and this is the Hilbert space $\mathcal{H}_k$
%that we mentioned before. The Hilbert space $\mathcal{H}_k$ has the property that for each $f\in\mathcal{H}_k$, one has: $\langle f,\,k(\bcdot,\,x)
%\rangle=f(x)$ for each $x\in\Omega$. 
%\smallskip

Given $\Omega$ and a collection of test functions $\Psi$, let us denote by $\mathcal{K}_{\Psi}(\mathbb{C})$ the set of all 
$\mathbb{C}$-valued positive kernels $k$ on $\Omega$ for which 
the operator $M_{\psi}:\mathcal{H}_k\lrarw\mathcal{H}_k$ defined by $M_{\psi}(f):=f\,\psi$, for all $f\in\mathcal{H}_k$, is a 
contraction for each $\psi\in\Psi$. Recall a contraction on a Hilbert space is a bounded linear operator whose operator norm is atmost $1$.

\begin{definition}
Let us denote by $H^\infty_\Psi(\mathbb{C})$ the collection of such $\mathbb{C}$-valued functions $\phi:\Omega\lrarw\mathbb{C}$ for which 
there exists a $C>0$ having the following property:
\begin{itemize}
\item[$(*)$] for each $k\in \mathcal{K}_{\Psi}(\mathbb{C})$, the bounded linear operator $M_{\phi}:\mathcal{H}_k\lrarw\mathcal{H}_k$ defined by 
 $M_{\phi}(f):=f\,\phi$ for all $f\in\mathcal{H}_k$, is a bounded linear operator with $||M_{\phi}||_{\mathcal{H}_k}\leq C$.
 \end{itemize}
 Given $\phi\in H^\infty_\Psi(\mathbb{C})$, define:
 \begin{equation}\label{E:Psinorm}
 ||\phi||_{\Psi}:=\inf\big\{C\,:\,\text{$C$ satisfying the property $(*)$ above}\big\}.
 \end{equation}
 \end{definition}
\noindent It turns out that $H^\infty_\Psi(\mathbb{C})$ is a Banach algebra with norm $||\bcdot||_{\Psi}$. The 
 \textit{scalar-valued $\Psi$-Schur-Agler class}, denoted by $\SA_{\Psi}(\mathbb{C})$, is defined by 
 $\SA_{\Psi}(\mathbb{C}):=\big\{\phi\in H^\infty_\Psi(\mathbb{C})\,:\,||\phi||_{\Psi}\leq 1\big\}$. It is a general fact that if $\phi\in H^\infty_\Psi(\mathbb{C})$ then $||\phi||_{\infty}:=\sup\{|\phi(z)|\,:\,z\in\Omega\}\leq ||M_{\phi}||_{\mathcal{H}_k}$ for all $k\in K_{\Psi}$. It follows from this that $||\phi||_{\infty}\leq ||\phi||_{\Psi}$.
 %Hence if $\phi\in \SA_{\Psi}(\mathbb{C})$   then $||\phi||_{\infty}\leq 1$.
 % At this point, we state the following result from \cite{B-B-C}.
 %\smallskip
 %\noindent{\bf Result~3.} Let $\Omega\subset\mathbb{C}^N$ be a bounded domain and let $\Psi$ be a class of holomorphic test functions on 
%$\Omega$. Then the family of scalar-valued $\Psi$-Schur-Agler class $SA_{\Psi}(\mathbb{C})$, is contained in the family 
%$H^{\infty}_1(\Omega,\,\mathbb{C}):=\big\{\phi:\Omega\lrarw\mathbb{C}\,:\, \text{$\phi$ is holomorphic and $||\phi||_{\infty}\leq 1$}\big\}$. 
%\smallskip
%\noindent The family $H^{\infty}_1(\Omega,\,\mathbb{C})$ is called the Schur-class on $\Omega$. An easy consequence of the above result is that the
%Banach algebra $H^\infty_\Psi(\mathbb{C})$ is contained in the algebra of all bounded holomorphic
%functions on $\Omega$ denoted by $H^{\infty}(\Omega,\,\mathbb{C})$.
We are now ready to state the main result of this article concerning the
problem (IS) in the case when $\A=H^{\infty}_{\Psi}(\mathbb{C})$. 

\begin{theorem}\label{T:maint}
Let $\Omega$ be a bounded domain and let $\Psi$ be a family of test functions. Consider the Banach algebra
$H^{\infty}_{\Psi}(\mathbb{C})$ consisting of bounded functions with the norm $||\bcdot||_{\Psi}$ as above. 
Let $\{w_j:j\in\nat\}\subset\Omega$ be a sequence in $\Omega$. Then the following are equivalent.
\begin{enumerate}
\item $\{w_j:j\in\nat\}$ is an interpolating sequence for $H^\infty_\Psi(\mathbb{C})$. 
\smallskip

\item For all admissible kernels $k\in \mathcal{K}_{\Psi}$, the normalized Grammians $G_k$ are uniformly bounded from below, i.e., there exists $N>0$ such that
$G_k\geq (1/N)\,\mathbb{I}$ for all $k\in \mathcal{K}_{\Psi}$. 
\smallskip

\item $\{w_j:j\in\nat\}$ is strongly separated and for all kernels $k\in \mathcal{K}_{\Psi}$, the normalized Grammians $G_k$ are uniformly bounded, i.e., there 
exists $M>0$ such that $G_k\leq M\,\mathbb{I}$ for all $k\in \mathcal{K}_{\Psi}$.

\item $(2)$ and $(3)$ above holds together. 

\end{enumerate}
\end{theorem}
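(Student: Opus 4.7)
The plan is to establish the core equivalence $(1)\Leftrightarrow(4)$ and then close the remaining equivalences by showing each half of $(4)$ forces the other. By the open mapping theorem applied to the surjective evaluation $L$, an interpolating sequence with constant $M$ yields, for every $(a_j)\in l^{\infty}_1(\nat)$, a lift $\phi\in H^\infty_\Psi(\mathbb{C})$ with $\phi(w_j)=a_j$ and $\|\phi\|_\Psi\le M$; hence $M_\phi$ is a multiplier of norm $\le M$ on every $\mathcal{H}_k$, $k\in\mathcal{K}_\Psi$, with adjoint action $M_\phi^{*}\,g_j=\bar a_j g_j$ on normalized kernel functions. For $(1)\Rightarrow(4)$, strong separation is immediate from interpolating the standard basis of $l^\infty$. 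The uniform lower Grammian bound I would obtain by a Rademacher sign-averaging argument: for each $\epsilon\in\{\pm 1\}^{\nat}$ pick $\phi_\epsilon$ with $\phi_\epsilon(w_j)=\epsilon_j$, and for finitely supported $c\in l^2$,
\[
\Bigl\|\sum_j \epsilon_j c_j g_j\Bigr\|^2 = \Bigl\|M_{\phi_\epsilon}^{*} \sum_j c_j g_j\Bigr\|^2 \le M^2 \langle G_k c,c\rangle,
\]
so averaging in $\epsilon$ collapses the left-hand side to $\|c\|_2^2$ and gives $G_k\ge(1/M^2)\I$ uniformly in $k$. The uniform upper Grammian bound I would derive by a dual argument on the coefficient sequence $(\langle h,g_j\rangle)_j$ of $h\in\mathcal{H}_k$, exploiting $\langle M_\phi h,g_j\rangle=a_j\langle h,g_j\rangle$ and interpolating the sign pattern of $(\langle h,g_j\rangle)$ to extract the Carleson/Bessel estimate $\sum_j|\langle h,g_j\rangle|^2\le C\|h\|^2$, uniformly in $k$.

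For $(4)\Rightarrow(1)$, the key tool is the Pick--Nevanlinna interpolation theorem for $H^\infty_\Psi(\mathbb{C})$, an Agler-type realization result in the spirit of Dritschel--McCullough or Ball--Trent: on a finite set $\{w_1,\ldots,w_n\}$, the problem $\phi(w_i)=a_i$ with $\|\phi\|_\Psi\le M$ is solvable iff $M^2[k(w_i,w_j)]-[a_i\bar a_j k(w_i,w_j)]\ge 0$ for every $k\in\mathcal{K}_\Psi$. In terms of normalized Grammians this reads $M^2 G_k^{(n)}\ge D_a G_k^{(n)} D_a^{*}$ with $D_a=\mathrm{diag}(a_j)$. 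The uniform upper bound gives $D_a G_k D_a^{*}\le\sup_k\|G_k\|\cdot\I$, and the uniform lower bound gives $M^2 G_k\ge (M^2/N)\I$, so any $M\ge\sqrt{N\sup_k\|G_k\|}$ verifies the Pick condition and produces finite interpolants of uniformly bounded $\Psi$-norm. A weak-$\ast$ compactness argument on the closed unit ball of $H^\infty_\Psi$ then extracts the infinite interpolant.

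The main obstacle is the additional content of the theorem, namely $(2)\Rightarrow(4)$ and $(3)\Rightarrow(4)$: the uniform lower Grammian bound alone must force the upper bound (and strong separation), and the upper bound plus strong separation must force the lower bound. For $(3)\Rightarrow(2)$ I would adapt the Aleman--Hartz--McCarthy--Richter \cite{A-H-M-R} program: strong separation provides explicit peaking functions, the upper bound is the Carleson/Bessel inequality, and Marcus--Spielman--Srivastava (Kadison--Singer) furnishes the combinatorial partition needed to assemble a uniformly bounded family of interpolating multipliers in every $\mathcal{H}_k$. For $(2)\Rightarrow(3)$ I would exploit closure properties of $\mathcal{K}_\Psi$ under natural operations (sums, weighted averages, Schur complements) so that an unbounded $\|G_{k_0}\|$ would generate a compound kernel $k'\in\mathcal{K}_\Psi$ violating the uniform lower bound. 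The conceptual challenge throughout is carrying these single-kernel, Hilbert-space-multiplier arguments uniformly over the family $\mathcal{K}_\Psi$, so that the resulting object lives in $H^\infty_\Psi(\mathbb{C})$ rather than in a fixed multiplier algebra.
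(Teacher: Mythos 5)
Your outline of $(1)\Leftrightarrow(4)$ is sound in spirit and essentially matches the paper: the paper's $(1)\Rightarrow(4)$ also mimics the Agler--McCarthy/Shapiro--Shields sign-averaging argument, and the paper's $(4)\Rightarrow(1)$ is likewise a Pick-type positivity condition, namely Lemma~\ref{L:solintp}, verified using both Grammian bounds. (One small caution: $H^\infty_\Psi(\mathbb{C})$ functions need not be holomorphic, so neither weak-$\ast$ compactness of the unit ball nor normal families is available off the shelf; the paper sidesteps this by applying the realization result, Result~4, directly to the infinite node set $\Omega_0=\{w_j\}$, which produces the interpolant in one shot.)

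The genuine gap is your treatment of the equivalence $(2)\Leftrightarrow(3)$, which is where the real content of the theorem lies, and where your proposal diverges from the paper and would likely fail. For $(3)\Rightarrow(2)$ you propose to import the Aleman--Hartz--McCarthy--Richter/Marcus--Spielman--Srivastava machinery, but that program is built on a fixed irreducible \emph{complete Nevanlinna--Pick} kernel. Here there is no single kernel and no CNP structure: the relevant condition must hold uniformly over the whole family $\mathcal{K}_\Psi$, and nothing in the $\Psi$-Schur--Agler framework gives a CNP factorization that would let MSS bite. More to the point, the AHMR result starts from \emph{weak} separation, whereas condition $(3)$ already assumes \emph{strong} separation, so the deep Kadison--Singer input is simply not needed. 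The paper instead proves Proposition~\ref{P:Equivalence}, which, via the realization theorem (Result~4) and the lurking-isometry argument, converts $(2)$ into the existence of a column $\Phi\in H^\infty_\Psi(\mathbb{C},l^2)$ with $\|\Phi\|_\Psi\le\sqrt{N}$ and $\Phi(w_j)(1)=e_j$, and converts the Grammian half of $(3)$ into the existence of a row $\widetilde\phi\in H^\infty_\Psi(l^2,\mathbb{C})$ with $\|\widetilde\phi\|_\Psi\le\sqrt{M}$ and $\widetilde\phi(w_j)(e_j)=1$. The bridge between the two is the transpose construction (Lemma~\ref{L:Phi_t}), the diagonal $l^2$-packaging of scalar Schur--Agler functions (Lemma~\ref{L:l^2valued}), and the product-stability of the Schur--Agler class (Lemma~\ref{L:prodschuragler}): from $\Phi$ one reads off the columns $\phi_i$ with $\phi_i(w_j)=\delta_{ij}$ (strong separation) and takes $\Phi^t$ to get the row function; conversely, from strong separation and $\widetilde\phi$ one forms $\Phi_1=\mathrm{diag}(\phi_i)\cdot\widetilde\phi^t$ and recovers a column function. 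No operator-algebraic partitioning theorem enters anywhere.

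Your sketch of $(2)\Rightarrow(3)$ via ``closure of $\mathcal{K}_\Psi$ under sums/averages/Schur complements'' is likewise not viable as stated: even granting such closure, a contradiction argument of that shape doesn't manufacture strong separation nor a uniform upper bound, and it ignores the realization theory that actually drives the proof. The missing idea in your proposal is precisely Proposition~\ref{P:Equivalence} together with the transpose/product lemmas: converting each half of $(4)$ into the existence of a column- or row-valued $\Psi$-Schur--Agler function, so that the equivalence $(2)\Leftrightarrow(3)$ becomes a matter of algebraic manipulation of transfer-function realizations rather than of deep combinatorics.
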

\noindent{Here, and elsewhere in the article $\mathbb{I}$ shall denote the identity operator.}
\smallskip

\noindent\textbf{Remark.} Let $k,\,l\in \mathcal{K}_{\Psi}$ be such that $l\,=\,g\,k$ for some positive kernel $g$. Then if $c\,\I-G_k\geq 0$ for some $c>0$ then $c\,\I-G_l\geq 0$. To see that, note that $G_g\geq 0$ and by the Schur-product theorem we get $(c\,\I-G_k)\,G_g\geq 0$. Notice now the Schur product of $\I$ and $G_g$ is $\I$ and $G_l\,=\,G_k\,G_g$ whence the conclusion. Proceeding similarly if $G_k-d\,\I\geq 0$ for some $d>0$, then $G_l-d\,\I\geq 0$, where $l,\,k\in\mathcal{K}_{\Psi}$ are as before. 
\smallskip

When $\Omega=\mathbb{D}$ and $\Psi=\{z\}$ then every $k\in\mathcal{K}_{\Psi}$ is of the form
$k\,=\,s\,g$, where $s$ denotes the Szeg\H{o} kernel and $g$ is some positive kernel. It follows from the remark
above that conditions $(2)$ and $(3)$ in Theorem~\ref{T:maint} has to be satisfied only for the Szeg\H{o} kernel.
It is a fact that $H^{\infty}_{\Psi}(\mathbb{C})$ is $H^{\infty}(\D)$ in this case. This leads 
to a characterization of interpolating sequences for $H^{\infty}(\D)$ which is equivalent to Result~1
(\cite[Section~9.5]{A-M}) by Carleson.

We shall present the proof of Theorem~\ref{T:maint} in Section~\ref{S:proofmaint}. Later, in Section~\ref{examples}, we shall provide several other examples where the above theorem 
could be applied in characterizing interpolating sequences for those algebras that could be realized as 
$H^{\infty}_{\Psi}(\mathbb{C})$ for some appropriately chosen $\Psi$.

\section{Preliminaries}\label{S:prelims}
In this section, we prove a few important lemmas and gather certain basic tools that will be needed in upcoming sections. We first begin with the definition of $\Psi$-Schur-Agler class functions in operator-valued setting. To  do that, we need to recall various notions of positive kernel on a domain $\Omega$.
\smallskip 

A positive kernel $k$ on a set $\Omega$ is a function $k:\Omega\times \Omega \rightarrow \mathbb{C}$ such that for any $n\geq 1$, any $n$
points $x_1,\ldots,x_n$ in $\Omega$ and any $n$ complex numbers $c_1,\ldots, c_n$, we have
\begin{align*}
\sum_{i=1}^{n}\sum_{j=1}^{n}\overline{c_i}c_j k(x_i,x_j)\geq 0.
\end{align*}
 
Let $\mathcal{E}$ is a Hilbert space and $k:\Omega\times \Omega\rightarrow B(\mathcal{E})$ is a function, then $k$ is called a positive 
kernel if for any $n\geq 1$, any $n$ points $x_1,\ldots,x_n$ in $\Omega$ and any $n$ vectors $\mathbf{e_1,\ldots, e_n}$ in $\mathcal{E}$, we have
\begin{align}\label{op. valued kernel}
\sum_{i=1}^{n}\sum_{j=1}^{n}\langle k(x_i,x_j)\mathbf{e_j}, \mathbf{e_i} \rangle\geq 0.
\end{align}
We also recall the notion of completely positive kernels here. Let $\A$ and $\B$ be two $C^*$-algebras and let $\Gamma$ be a function
on $\Omega \times \Omega$ taking values in $B(\A, \B)$ (space of all bounded linear operators from $\A$ to $\B$). $\Gamma$ is called a 
\textit{completely positive kernel} if %Check notation for B
\begin{align} \label{compositivekernel}
	\sum_{i,j =1}^{n} b_i^* \  \Gamma(x_i, x_j)(a_i ^*a_j) b_j \geq 0
\end{align}
for all $n \geq 1$, $a_1, a_2, \dots, a_n \in \A$, $b_1, b_2, \dots, b_n \in \B$ and $x_1, x_2, \dots, x_n \in \Omega$.\\

\subsection{The $\Psi$-Schur-Agler Class: General Case}\label{SAClass}
Given a Hilbert space $\E$ and a $B(\E)$-valued kernel $K$ \textit{(satisfying (\ref{op. valued kernel}))}
on $\Omega$, there is a Hilbert space $\mathcal{H}(K)$ of $\E$-valued functions on $\Omega$ such that span of the set
\begin{align*}
\{K(\cdot, \omega)\mathbf{e}:\mathbf{e}\in \E,\, \omega\in \Omega \}
\end{align*}
is dense in $\mathcal{H}(K)$ and for any $\mathbf{e}\in \E$, $\omega \in \Omega$ and $h\in \mathcal{H}(K)$, we have
\begin{align*}
\langle h, K(\cdot, \omega)\mathbf{e}\rangle_{\mathcal{H}(K)} = \langle h(\omega), \mathbf{e} \rangle_{\E}.
\end{align*}

Given a set of test functions $\Psi$ on $\Omega$, a kernel $K:\Omega \times \Omega\rightarrow B(\E)$ is said to be {\em $\Psi$-admissible} if
the map $M_\psi$, sending each element $h\in \mathcal{H}(K)$ to $\psi\cdot h$, is a contraction on $\mathcal{H}(K)$.
We denote the set of all $B(\E)$-valued $\Psi$-admissible kernels by $\K_\Psi(\E)$. For two Hilbert spaces $\U$ and $\Y$,
we say that {\em $S: \Omega \rightarrow B(\U, \Y)$ is in $H^\infty_\Psi(\U, \Y)$}
if there is a constant $C$ such that the $B(\Y \otimes \Y)$-valued function
\begin{equation} \label{C} (C^2 I_\Y - S(x)S(y)^*)\otimes k(x,y)  \end{equation}
is a positive $B(\Y\otimes \Y)$-valued kernel for every $k$ in $\K_\Psi(\Y)$. If $S$ is in $H^\infty_\Psi(\U, \Y)$,
then we denote by $C_S$ the smallest $C$ which satisfies \eqref{C}.
The $\Psi$-Schur-Agler class, denoted by $\SA_\Psi(\U, \Y)$, is the set of those
$S\in H^\infty_\Psi(\U, \Y)$ for which $C_S=1$. Also
observe when $\U=\mathbb{C}=\Y$, then  $H^\infty_\Psi(\U, \Y)=H^{\infty}_{\Psi}(\mathbb{C})$.
We begin with the following important lemma.

\begin{lemma}\label{L:prodschuragler}
Let $\X,\Y,\HH$ be Hilbert spaces such that either $\HH=\mathbb{C}$ or $\HH=\Y$. Let $g\in\SA_{\Psi}(\X,\Y)$ and $f\in\SA_{\Psi}(\Y,\HH)$.
Then if we set $fg(z):= f(z)g(z)$ for all $z\in\Omega$ then $fg\in\SA_{\Psi}(\X,\HH)$. 
\end{lemma}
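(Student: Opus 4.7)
The plan is to reduce the claim to the Schur--Agler conditions already assumed on $f$ and $g$ by means of the elementary operator identity
\begin{equation*}
I_\HH - f(x)g(x)g(y)^*f(y)^* \;=\; \bigl(I_\HH - f(x)f(y)^*\bigr) \;+\; f(x)\bigl(I_\Y - g(x)g(y)^*\bigr)f(y)^*,
\end{equation*}
which holds at every pair $x,y\in\Omega$. Fix an arbitrary admissible kernel $k\in\K_\Psi(\HH)$; tensoring both sides of the above identity with $k(x,y)$, it suffices to verify that each of the two resulting summands is a positive $B(\HH\otimes\HH)$-valued kernel on $\Omega$, since positivity of kernels is preserved under addition.

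The first summand, $(I_\HH - f(x)f(y)^*)\otimes k(x,y)$, is positive directly by the definition of $f\in\SA_\Psi(\Y,\HH)$ applied to $k\in\K_\Psi(\HH)$. For the second summand I would use the tensor-product factorisation
\begin{equation*}
\bigl(f(x)(I_\Y - g(x)g(y)^*)f(y)^*\bigr)\otimes k(x,y) \;=\; (f(x)\otimes I_\HH)\,\bigl[(I_\Y - g(x)g(y)^*)\otimes k(x,y)\bigr]\,(f(y)^*\otimes I_\HH),
\end{equation*}
together with the standard fact that the sandwich operation $K(\cdot,\cdot)\mapsto A(\cdot)\,K(\cdot,\cdot)\,A(\cdot)^*$ preserves positivity of operator-valued kernels. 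It therefore remains to show that the bracketed middle factor is a positive $B(\Y\otimes\HH)$-valued kernel on $\Omega$.

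This last step is precisely where the hypothesis $\HH\in\{\Y,\mathbb{C}\}$ is used. If $\HH=\Y$, then $k$ already lies in $\K_\Psi(\Y)$ and the required positivity is exactly the Schur--Agler condition defining $g\in\SA_\Psi(\X,\Y)$. If $\HH=\mathbb{C}$, then $k$ is scalar admissible; set $\widetilde{k}(x,y) := k(x,y)\,I_\Y$ and note that $\mathcal{H}(\widetilde{k})\cong\mathcal{H}(k)\otimes\Y$ in such a way that every $M_\psi$ on the left corresponds to $M_\psi\otimes I_\Y$ on the right. Since $k\in\K_\Psi(\mathbb{C})$ these multipliers are contractions, so $\widetilde{k}\in\K_\Psi(\Y)$. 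Applying the Schur--Agler condition for $g$ to $\widetilde{k}$ and then specialising the test vectors to simple tensors $u_i\otimes w$ with a fixed $w\in\Y$ extracts the positivity of $k(x,y)(I_\Y - g(x)g(y)^*)$ as a $B(\Y)$-valued kernel, which is what is needed because $\Y\otimes\HH=\Y$ when $\HH=\mathbb{C}$.

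The principal difficulty is not conceptual but bookkeeping: keeping careful track of which Hilbert space each factor in the tensor decomposition lives in, and invoking the correct Schur--Agler hypothesis at the right moment. The only mildly non-routine ingredient is the reduction via $\widetilde{k}$ in the scalar case $\HH=\mathbb{C}$, and that rests on the transparent admissibility transfer $\mathcal{H}(\widetilde{k})\cong\mathcal{H}(k)\otimes\Y$; the restriction $\HH\in\{\Y,\mathbb{C}\}$ in the statement is exactly what is needed for this bookkeeping to close without introducing auxiliary multilinear tensor manipulations on $\HH$.
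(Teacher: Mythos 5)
Your proof is correct and follows essentially the same route as the paper: the same algebraic decomposition of $I_\HH - (fg)(x)(fg)(y)^*$, the same case split on $\HH\in\{\Y,\mathbb{C}\}$, and, in the scalar case, the same device of extending $k$ to $\widetilde{k}=kI_\Y$ and checking its $\Psi$-admissibility. The one presentational difference is that you invoke the general conjugation principle $K\mapsto A(\cdot)KA(\cdot)^*$ explicitly (via the factorization through $f(x)\otimes I_\HH$), whereas the paper carries out the equivalent verification by hand with carefully chosen test vectors of the form $f(z_i)^*\otimes c_i e$ and $V_i=f(z_i)^*y_{i1}$; your version is a little cleaner but amounts to the same computation.
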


\begin{proof}
Notice that for each $z\in\Omega$, $fg(z)\in B(\X,\,\HH)$. We need to show that for each $k\in\K_\Psi(\HH)$,
$\big(I_{\HH}-f(z)g(z)g(w)^{*}f(w)^{*}\big)\otimes k(z,\,w)$ is a positive $B(\HH\otimes\HH)$-valued kernel.
Note that
\begin{align}
\big(I_{\HH}-(fg)(z)(fg)(w)^{*}\big)\otimes k(z,\,w)=&\big(I_{\HH}-f(z)g(z)g(w)^{*}f(w)^{*}\big)\otimes k(z,\,w)\nonumber\\
=&\big(I_{\HH}-f(z)f(w)^{*}\big)\otimes k(z,w)\nonumber\\
+&f(z)\big(I_{\Y}-g(z)g(w)^{*}\big)f(w)^{*}\otimes k(z,w).
\end{align}
The expression $\big(I_{\HH}-f(z)f(w)^{*}\big)\otimes k(z,w)$ is positive as $f\in\SA_{\Psi}(\Y,\HH)$.
We now consider $f(z)\big(I_{\Y}-g(z)g(w)^{*}\big)f(w)^{*}\otimes k(z,w)$ in two cases:
\smallskip

\noindent{\bf Case 1.} $\HH=\mathbb{C}$. 
\smallskip

\noindent In this case, the above expression becomes $\big(f(z)(I_{\Y}-g(z)g(w)^{*})f(w)^{*}\big)k(z,w)$. Now we make
the following claim.
\smallskip

\noindent{\bf Claim.} For a $\mathbb{C}$-valued $\Psi$-admissible kernel $k$, if we let $K:=k\,I_{\Y}$ then $K$ is a $B(\Y)$-valued kernel that is $\Psi$-admissible.
\smallskip

\noindent To see that  $K:=k\,I_{\Y}$ is a kernel, choose $y_1,\dots,y_n\in\Y$ and $z_1,\dots,z_n\in\Omega$ and compute
\[
\sum_{i,j=1}^n \langle K(z_i,z_j)y_j,y_i\rangle=\sum_{i,j=1}^n\langle k(z_i,z_j)y_j, y_i\rangle\\
=\sum_{i,j=1}^n k(z_i,z_j)\langle y_j, y_i\rangle.
\]
Now fix a basis $\{e_{\alpha}\}$ of $\Y$ and write $y_i=\sum_{\alpha}y_{\alpha,i}\,e_{\alpha}$. If we define $\bar{y}_i=
\sum\bar{y}_{\alpha,i}\,e_{\alpha}$, then the last expression above is equal to 
\[
\sum_{i,j=1}^n k(z_i,z_j)\langle \bar{y}_j, \bar{y}_i\rangle
\]
which is positive. Consequently $K$ above is positive too. We now show that $K$ is $\Psi$-admissible. Choose $\psi\in\Psi$ and $z_1,\dots,z_n\in\Omega$
and $y_1,\dots,y_n\in\Y$  and compute
\begin{align*}
||\sum_{j=1}^n K(\bcdot,z_j)y_j||^2-||M_{\psi}^{*}(\sum_{j=1}^n K(\bcdot,z_j)h_j)||^2
&=\sum_{i,j=1}^n\big(\langle k(z_i,z_j)y_j,y_i\rangle-\psi(z_i)\psi(z_j)\langle k(z_i,z_j)y_j, y_i\rangle\big)\\
&=\sum_{i,j=1}^n\big(1-\psi(z_i)\psi(z_j)\big)k(z_i,z_j)\langle h_j, h_i\rangle.
\end{align*}
The last expression above is nonnegative and hence $||M_{\psi}^*||=||M_{\psi}||\leq 1$. Since the above holds for any $\psi\in\Psi$, the claim is established. 
\smallskip

Coming back to the proof of Case~1 above, using the claim we see that $(I_{\Y}-g(z)g(w)^{*})\otimes (k I_{\Y})$ is a $B(\Y\otimes\Y)$-valued kernel.
Choose a basis element $e$ of $\Y$ and consider $u_i=f(z_i)^{*}\otimes(c_i e)$. Then 
\begin{align*}
0\leq&\sum_{i,j=1}^n\Big{\langle} \Big(\big(I_{\Y}-g(z_i)g(z_j)^{*}\big)\otimes\big(k(z_i,z_j)I_{\Y}\big)\Big)u_j, u_i\Big{\rangle}\nonumber\\
=&\sum_{i,j=1}^n \big{\langle} \big(I_{\Y}-g(z_i)g(z_j)\big)f(z_j)^{*}(1), f(z_i)^{*}(1)\big{\rangle}\,k(z_i,z_j)\,\langle c_je,c_ie\rangle\\
=&\sum_{i,j=1}^n\bar{c}_i c_j\big{\langle}f(z_i)\big(I_{\Y}-g(z_i)g(z_j)\big)f(z_j)^{*}(1), 1\big{\rangle}\,k(z_i,z_j)\\
=&\sum_{i,j=1}^n \bar{c}_i c_j\Big(f(z_i)\big(I_{\Y}-g(z_i)g(z_j)^{*}\big)f(z_j)^{*}\Big)\,k(z_i,z_j)
\end{align*}
Thus $f(z)\big(I_{\Y}-g(z)g(w)^{*}\big)f(w)^{*}\,k(z,w)$ is a positive kernel and we are done in this case.
\smallskip

\noindent{\bf Case 2.} $\HH=\Y$. 
\smallskip

\noindent So $f\in\SA_{\Psi}(\Y,\Y)$ and we need to show that $fg\in\SA_{\Psi}(\X,\Y)$. Consider any $B(\Y)$-valued $\Psi$-admissible 
kernel $K$ and consider $f(z)\big(I_{\Y}-g(z) g(w)^{*}\big) f(w)^{*}\otimes K(z,w)$
which has to be shown a positive $B(\Y\otimes \Y)$-valued kernel. For this purpose take $y_{i1}\otimes y_{i2}\in\Y\otimes\Y$, $1\leq i\leq n$, and compute:
\begin{align*}
&\sum_{i,j=1}^n\Big{\langle}[\Big(f(z_i)\big(I_{\Y}-g(z_i) g(z_j)^{*}\big)\Big)f(z_j)\otimes K(z_i,z_j)](y_{j1}\otimes y_{j2}), y_{i1}\otimes y_{i2}
\Big{\rangle}\\
=&\sum_{i,j=1}^n{\Big{\langle}f(z_i)\big(I_{\Y}-g(z_i)g(z_j)^{*}\big)f(z_j)^{*}y_{j1},\,y_{i1}\Big{\rangle}}{\langle K(z_i,z_j)y_{j2},y_{i2}\rangle}\\
=&\sum_{i,j=1}^n{\Big{\langle}\big(I_{\Y}-g(z_i)g(z_j)^{*}\big)V_j,\,V_i\big{\rangle}}\,{\langle K(z_i,z_j)y_{j2}, y_{i2}\rangle},
\end{align*}
where in the last expression $V_i=f(z_i)^{*}y_{i1}$ for all $i,\,1\leq i\leq n$. The last expression, furthur, can be written as 
\[
\sum_{i,j=1}^n\Big{\langle}\big[\big(I_{\Y}-g(z_i)g(z_j)^{*}\big)\otimes K(z_i,z_j)\big](V_j\otimes y_{j2}),\,(V_i\otimes y_{i2}\Big{\rangle},
\]
which is non-negative owing to the fact that $g\in\SA_{\Psi}(\X,\Y)$ and $K$ is a $B(\Y)$-valued $\Psi$-admissible positive kernel. Thus 
$f(z)\big(I_{\Y}-g(z) g(w)^{*}\big) f(w)^{*}\otimes K(z,w)$ is a positive kernel for any $K$ that is $\Psi$-admissible and we are done in this case too.
\end{proof}

\subsection{$\Psi$-unitary Colligations and Realization of $\Psi$-Schur-Agler class functions}
%Let $\X$ be a Hilbert space equipped with a $*$-representation of $\mathcal{C}_b(\Psi)$. Then $\X$ can be viewed as a $(\mathcal{C}_b(\Psi), \mathbb{C})$-correspondence. From the balancing property \textit{(Equation (\ref{balancing}))}, it follows that $\mathcal{C}_b(\Psi) \otimes \X \simeq \X$ %correct the sign.
Let $\X$, $\U$ and $\Y$ be Hilbert spaces and let $\Psi$ be a fixed set of test functions. By a $\Psi$-unitary colligation,
we mean a pair $(U, \rho)$ where $U$ is a unitary operator from $\X \oplus \U$ to $\X \oplus \Y$, and
$ \rho: \mathcal{C}_b(\Psi) \rightarrow B(\X)$ is a $*$-representation. If we write $U$ as
\[
U =
\bordermatrix{ & \X & \U \cr
	\X & A & B \cr
	\Y & C & D}, \qquad
\]
then we can define a bounded $B(\U, \Y)$ valued function on $\Omega$, given by
\begin{align}
  f(x) = D + C \rho (E(x)) (\I_{\X}- A \rho(E(x)))^{-1} B \ \ \forall \ x \in \Omega,
\end{align}
equivalently,
\begin{align}
  f(x) = D + C(\I_{\X}- \rho (E(x)) A)^{-1} \rho (E(x)) B  \ \ \forall \ x \in \Omega.
\end{align}
This $f$ is called the transfer function associated with $(U,\rho)$. Since $U^*$ is also a unitary, we have that
\begin{align*}
g(x) = D^* + B^*(\I_{\X}- \rho (E(x)) A^*)^{-1} \rho(E(x)) C^*
\end{align*}
 is the transfer function of the colligation $(U^*,\rho)$. The following important result was established in 
 \cite{B-B-C}.\\

\noindent{\bf Result~4.}\label{RealTheo}
	Consider a function $S_0$ on some subset $\Omega_0$ of $\Omega$ with values in $B(\U, \Y)$. Then the following conditions are equivalent.
	\begin{enumerate}
		\item There exists an $S$ in $H^\infty_\Psi(\U, \Y)$ with $C_S =1$ such that $S|_{\Omega_0} =S_0$.
		\item $S_0$ has an {\em Agler decomposition} on $\Omega_0$, that is, there exists a completely positive kernel $\Gamma: \Omega_0 \times \Omega_0 \rightarrow B(\mathcal{C}_b(\Psi), B(\Y))$ so that
		\begin{align*}
		I_{\Y} - S_0(z) S_0(w)^* = \Gamma(z,w) (1- E(z) E(w) ^*) \ \ \text{for all} \ z, w \in \Omega_0.
		\end{align*}
		\item There exists a Hilbert space $\X$, a $*$-representation $\rho: \mathcal{C}_b(\Psi) \rightarrow B(\X)$ and a $\Psi$-unitary colligation $(V, \rho)$ such that writing $V$ as
		\[
		V =
		\bordermatrix{ & \X & \U \cr
			\X & A & B \cr
			\Y & C & D}, \qquad
		\]
		one has
		\begin{align}
		S_0(z) = D+ C(\I_{\X}- \rho(E(z)) A) ^{-1} \rho(E(z)) B \ \ \text{for all} \ z \in \Omega_0.
		\end{align}
		\item There exists a Hilbert space $\X$, a $*$-representation $\rho: \mathcal{C}_b(\Psi) \rightarrow B(\X)$ and a $\Psi$-unitary colligation $(W, \rho)$ such that writing $W$ as
		\[
		W =
		\bordermatrix{ & \X & \Y \cr
			\X & A_1 & B_1 \cr
			\U & C_1 & D_1}, \qquad
		\]
		one has
		\begin{align}\label{transfer}
		S_0(z)^* = D_1+ C_1 (\I_{\X}- \rho(E(z))^* A_1) ^{-1} \rho(E(z))^* B_1 \ \ \text{for all} \ z \in \Omega_0.
		\end{align}
	\end{enumerate}
	
The following lemma is proved in \cite{B-B-C} which was one of tools in establishing the Result~4 above. Since it is needed in the proof of our main theorem, we state it here without its proof.
	
\begin{lemma} \label{L:existcompletepos}
Let $J : \Omega \times \Omega\rightarrow B(\Y)$ be a self-adjoint function. Suppose 
\begin{align}\label{TheJ}
J\oslash K: (z,w)\mapsto J(z,w)\otimes K(z,w)
\end{align}
is a positive kernel for every $B(\Y)$-valued kernel $K$, then there is a completely positive kernel 
$\GammaDomainRange$ such that
$$J(z,w)=\Gamma(z,w)(1- E(z)E(w)^*)\,\, \text{for all}\,\, z,w\in \Omega.$$
\end{lemma}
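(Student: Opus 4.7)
The plan is a Hahn--Banach separation argument applied on finite subsets of $\Omega$, followed by a compactness patching to $\Omega$ itself.

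\textbf{Reduction to the finite case.} I first argue that it suffices to produce, for every finite subset $F\subset\Omega$, a completely positive kernel $\Gamma_F\colon F\times F\to B(\mathcal{C}_b(\Psi),B(\Y))$ satisfying $J(z,w)=\Gamma_F(z,w)(1-E(z)E(w)^*)$ on $F\times F$. Since $\|E(z)\|_{\mathcal{C}_b(\Psi)}=\sup_{\psi\in\Psi}|\psi(z)|<1$, the element $1-E(z)E(z)^*\in\mathcal{C}_b(\Psi)$ is bounded below by a positive constant $c_z>0$, so $\Gamma_F(z,z)(1)\leq c_z^{-1}J(z,z)$; complete positivity then gives $\|\Gamma_F(z,z)\|=\|\Gamma_F(z,z)(1)\|$, and a Kadison--Schwarz bound controls the off-diagonal entries. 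A standard weak-$*$ compactness / direct-limit argument along an exhausting sequence of finite subsets then patches the $\Gamma_F$ into a global $\Gamma$.

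\textbf{Separation on a finite subset.} Fix $F=\{z_1,\dots,z_n\}\subset\Omega$. After passing to a finite-dimensional subspace of $\Y$ (and taking a further weak-$*$ limit at the end), work in the real vector space $V_F$ of self-adjoint $B(\Y)$-valued $n\times n$ block matrices. Inside $V_F$ consider the convex cone
\[
\mathcal{C}_F:=\Big\{\bigl(\Gamma(z_i,z_j)(1-E(z_i)E(z_j)^*)\bigr)_{i,j=1}^n \,:\, \Gamma \text{ completely positive on } F\Big\}.
\]
The uniform bound from the previous step makes $\mathcal{C}_F$ a closed convex cone. Suppose, for contradiction, that $J|_{F\times F}\notin\mathcal{C}_F$. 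By Hahn--Banach there is a linear functional $L$ on $V_F$ with $L(J|_{F\times F})<0$ and $L\geq 0$ on $\mathcal{C}_F$; represent it as $L(M)=\sum_{i,j}\mathrm{tr}(K_{ij}^{*}M_{ij})$ for a self-adjoint block matrix $K=(K_{ij})$ with entries in $B(\Y)$.

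\textbf{Extracting a positive $B(\Y)$-valued kernel.} The cone $\mathcal{C}_F$ contains in particular the building blocks obtained from one-dimensional $*$-representations $\rho_\psi\colon f\mapsto f(\psi)$ of $\mathcal{C}_b(\Psi)$ for $\psi\in\Psi$, namely kernels of the form $(z_i,z_j)\mapsto(1-\psi(z_i)\overline{\psi(z_j)})\,y_i\overline{y_j}\,T$ with $y_i\in\mathbb{C}$ and $T\in B(\Y)_{+}$. Positivity of $L$ on all such, as $\psi, y_i, T$ vary, says that $K$ is positive when Schur-multiplied against every kernel of the form $(1-\psi(z)\overline{\psi(w)})$. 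Using that $\Psi$ together with unity generates the functions on any finite subset of $\Omega$, together with a convex combination / limiting argument, this forces $K$ itself to be a positive $B(\Y)$-valued kernel on $F$.

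\textbf{Contradiction.} With $K$ positive, the hypothesis yields that $J\oslash K$ is a positive $B(\Y\otimes\Y)$-valued kernel, and pairing with a ``flip'' vector (equivalently, applying the trace pairing on the tensor factor) produces
\[
\sum_{i,j}\mathrm{tr}\bigl(J(z_i,z_j)\,K_{ij}\bigr)\geq 0,
\]
which is precisely $L(J|_{F\times F})\geq 0$, contradicting the choice of $L$. Hence $J|_{F\times F}\in\mathcal{C}_F$, and the reduction completes the proof. The delicate point, and the expected main obstacle, is the kernel-extraction step: the test-function generation axiom on $\Psi$ is exactly what ensures that probing $L$ against completely positive kernels arising from $*$-representations of $\mathcal{C}_b(\Psi)$ is rich enough to recover positivity of $K$ as a bare $B(\Y)$-valued kernel, despite the ``dressing'' by the factor $1-E(z)E(w)^*$; without this axiom a genuine positivity leak could occur.
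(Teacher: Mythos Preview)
The paper does not actually prove this lemma: it is imported from \cite{B-B-C} and, as the text says, ``state[d] here without its proof.'' So there is no in-paper argument to compare against. That said, the cone--separation scheme you sketch is exactly the standard route used in \cite{B-B-C} and in \cite{D-M}: reduce to a finite set $F$, show the cone $\mathcal{C}_F=\{(\Gamma(z_i,z_j)(1-E(z_i)E(z_j)^*))_{i,j}:\Gamma\ \text{CP on }F\}$ is closed using the diagonal lower bound $1-E(z)E(z)^*\geq c_z>0$, separate $J|_{F\times F}$ from $\mathcal{C}_F$ by a Hahn--Banach functional $L$ represented by a self-adjoint block matrix $K$, identify $K$ as an admissible kernel, and contradict the hypothesis. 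The architecture is sound.

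There is one substantive misstep, in the extraction step. You attribute the passage from ``$L\geq 0$ on $\mathcal{C}_F$'' to ``$K$ is a positive $B(\Y)$-valued kernel'' to the second test-function axiom (algebraic generation on finite sets), and you single this out as the delicate point. It is neither needed nor sufficient for that purpose. The correct mechanism is much simpler: for each $\psi\in\Psi$ and each positive $B(\Y)$-valued kernel $P$ on $F$, the map $\Gamma_{\psi,P}(z,w)(f):=f(\psi)\,P(z,w)$ is completely positive, so $(1-\psi\overline{\psi})\oslash P\in\mathcal{C}_F$. Nonnegativity of $L$ on these elements, for all $P\geq 0$, gives (up to the usual transpose bookkeeping in the trace pairing) that $(1-\psi\overline{\psi})\oslash K\geq 0$ for every $\psi\in\Psi$. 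Positivity of $K$ itself then comes for free by Schur-multiplying with the positive Szeg\H{o}-type kernel $(z,w)\mapsto(1-\psi(z)\overline{\psi(w)})^{-1}=\sum_{n\geq 0}\psi(z)^n\overline{\psi(w)}^{\,n}$. Thus $K$ is a $\Psi$-admissible positive kernel without any appeal to generation, and the contradiction step goes through. (Relatedly, note that as the lemma is \emph{used} throughout the paper, the hypothesis is that $J\oslash K\geq 0$ for every $\Psi$-\emph{admissible} $B(\Y)$-valued kernel $K$; you are implicitly working with that version, which is the correct one.) So your ``expected main obstacle'' is not an obstacle at all; once you test $L$ against the right family of completely positive kernels, the admissible kernel drops out immediately.
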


We now end this section with another lemma that shows an application of Result~\ref{RealTheo} to the finite interpolation problem.
\begin{lemma}\label{L:solintp}
Let $\underline{w}=\{w_j:j\in \nat\}\subset\Omega$ be a sequence in $\Omega$ and let $\underline{x}=\{x_j:\in \nat\}$ be a sequence of complex numbers. Then 
there exists $f\in H^{\infty}_{\Psi}(\mathbb{C})$ with $||f||_{\Psi}\leq C_{\underline{w}}$ and $f(w_j)=x_j$ if and only if for every $n\in\nat$
the matrix $$\Big(\big(C_{\underline{w}}^2-x_i\,\overline{x_j}\big)k(w_i,w_j)\Big)_{i,j=1}^n$$ is positive semi-definite for every
$k\in\mathcal{K}_{\Psi}$.
\end{lemma}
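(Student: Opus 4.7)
The forward implication is a direct consequence of the definition. Suppose $f \in H^\infty_\Psi(\mathbb{C})$ satisfies $\|f\|_\Psi \leq C_{\underline{w}}$ and $f(w_j) = x_j$. Then $g := f/C_{\underline{w}} \in \SA_\Psi(\mathbb{C})$, so by definition the kernel $\bigl(1 - g(z)\overline{g(w)}\bigr)\,k(z,w)$ is positive semi-definite for every $k \in \mathcal{K}_\Psi$. Restricting to the points $(w_i, w_j)$ with $i, j \leq n$ and scaling through by $C_{\underline{w}}^2$ yields exactly the positivity claimed.

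For the converse, my plan is to construct the interpolant via the realization theorem. Set $\Omega_0 := \{w_j : j \in \nat\}$ and define the partial function $S_0 : \Omega_0 \to \mathbb{C}$ by $S_0(w_j) := x_j/C_{\underline{w}}$. If one can verify condition (2) of \textbf{Result 4} for $S_0$, namely that $S_0$ admits an Agler decomposition
\begin{equation*}
1 - S_0(z)\overline{S_0(w)} \,=\, \Gamma(z,w)\bigl(1 - E(z)E(w)^*\bigr) \qquad (z, w \in \Omega_0)
\end{equation*}
for some completely positive $\Gamma$, then Result 4 produces an extension $S \in H^\infty_\Psi(\mathbb{C})$ with $C_S = 1$. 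The function $f := C_{\underline{w}}\,S$ then satisfies $\|f\|_\Psi \leq C_{\underline{w}}$ and $f(w_j) = x_j$, completing the proof.

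To build the required completely positive $\Gamma$, I would apply \textbf{Lemma \ref{L:existcompletepos}} to the self-adjoint function $J(z,w) := 1 - S_0(z)\overline{S_0(w)}$ on $\Omega_0 \times \Omega_0$. Dividing the hypothesis through by $C_{\underline{w}}^2$ shows $J \oslash k$ is a positive scalar kernel on $\Omega_0$ for every $k \in \mathcal{K}_\Psi$.

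\emph{Main obstacle.} Lemma \ref{L:existcompletepos} requires the positivity of $J \oslash K$ for \emph{every} scalar positive kernel $K$ on $\Omega_0$, whereas the hypothesis delivers this only for the distinguished family $\mathcal{K}_\Psi$. The plan to bridge this gap is to work on each finite subset $F_n := \{w_1,\ldots,w_n\}$: a finite-dimensional Hahn--Banach separation over the convex cone $\{[k(w_i,w_j)]_{i,j\leq n} : k \in \mathcal{K}_\Psi\}$\,---\,exactly the technique used in the proof of Lemma \ref{L:existcompletepos} in \cite{B-B-C}\,---\,converts the Pick-matrix positivity into a completely positive kernel $\Gamma_n : F_n \times F_n \to B(\mathcal{C}_b(\Psi), B(\mathbb{C}))$ realizing the Agler decomposition on $F_n$. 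A weak-$*$ compactness and diagonal extraction argument then assembles the $\Gamma_n$ into a single completely positive $\Gamma$ on $\Omega_0$, and Result 4 produces $S$, hence the desired $f$.
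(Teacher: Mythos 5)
Your proposal follows essentially the same route as the paper: forward direction from the definition of $H^\infty_\Psi$ and part (2) of Result~4; converse by forming $S_0(w_j)=x_j/C_{\underline{w}}$, applying Lemma~\ref{L:existcompletepos} to $J(z,w)=1-S_0(z)\overline{S_0(w)}$ to get a completely positive $\Gamma$, invoking the equivalence of (1) and (2) in Result~4 to produce $S$, and scaling by $C_{\underline{w}}$. The ``main obstacle'' you flag, however, is not a real gap in the argument. The statement of Lemma~\ref{L:existcompletepos} as printed (``for every $B(\Y)$-valued kernel $K$'') is a slip: the lemma from \cite{B-B-C} is intended with the hypothesis that $J\oslash K$ is positive for every \emph{$\Psi$-admissible} kernel $K$, and that is exactly how the present paper applies it, both in this lemma and again in the proof of Proposition~\ref{P:Equivalence}, where only $(N-\delta_{ij})k(w_i,w_j)\geq 0$ for $k\in\mathcal{K}_\Psi$ is available. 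Indeed the lemma is used precisely to pass from (1) to (2) of Result~4, where (1) by definition gives positivity only against admissible kernels, so the stronger hypothesis would render the lemma useless for its stated purpose. Once the hypothesis is read correctly, the extra machinery you propose (finite-dimensional Hahn--Banach over the cone of admissible Pick matrices plus a weak-$*$ compactness/diagonal extraction) is a re-derivation of Lemma~\ref{L:existcompletepos} itself rather than a bridge over a gap, and can simply be replaced by a direct citation to that lemma, exactly as the paper does.
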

\begin{proof} 
First assume that there exists $f\in H^{\infty}_{\Psi}(\mathbb{C})$ with $||f||_{\Psi}\leq C_{\underline{w}}$ and such that $f(w_j)=x_j$. Then it follows\,---\,from part $(2)$ of Result~4\,---\,that $\big(\big(C_{\underline{w}}^2-x_i\,\overline{x_j}\big)k(w_i,w_j)\big)\geq 0$ for all $k\in\mathcal{K}_{\Psi}$.
Conversely, assume that the functional $(i,j)\mapsto\big(C_{\underline{w}}^2-x_i\,\overline{x_j}\big)k(w_i,w_j)$ is positive semi-definite for all 
$k\in\mathcal{K}_{\Psi}$. Let us denote by $\Omega_{0}:=\{w_i:i\in\nat\}$ and $y_i={x_i}/{C_{\underline{w}}}$. Then the function 
$\mathcal{J}:\Omega_0\times\Omega_0\lrarw\mathbb{C}$ defined by $\mathcal{J}(w_i,w_j)=(1-y_i\bar{y}_j)$ satisfies the following:
\[
{\mathcal{J}(w_i,w_j)}^{*}=\mathcal{J}(w_j,w_i) \ \ \ \text{and $\mathcal{J}(w_i,w_j)\,k(w_i,w_j)\geq 0$ \ $\forall$ $k\in\mathcal{K}_{\Psi}$.}
\]
Now by Lemma~\ref{L:existcompletepos}, there exists a completely positive kernel
$\Gamma:\Omega_0\times\Omega_0\lrarw{\mathcal{C}_b(\Psi)}^{*}$ such that 
\[
\mathcal{J}(w_i,w_j)=(1-y_i\overline{y_j})=\Gamma(w_i,w_j)\big(1-E(w_i)E(w_j)^{*}\big),\ \ \ \forall \ i, j.
\]
Let $S_0:\Omega_0\lrarw\mathbb{C}$ be defined by $S_0(w_j):=y_j$. Then from the equivalence of part $(1)$ and $(2)$ in Result~4, we get that there exists $S\in H^{\infty}_{\Psi}(\mathbb{C})$ with $||S||_{\Psi}\leq 1$ and such that $S|_{\Omega_0} =S_0$. Observe
that $\phi=C_{\underline{w}}S$ that has the desired properties. 
\end{proof}

\section{An Important Proposition}\label{S:ImpProp}
In this section, we shall prove a crucial proposition, namely: Proposition~\ref{P:Equivalence} below.
The proposition is an 
analogue, in our setting, of a result in the case of an irreducible complete Nevanlinna-Pick kernel due to Agler-McCarthy; see \cite[Theorem~9.46]{A-M}. In fact, as the later result plays a crucial role in the proof of (S+C)$\implies$(IM) in Result~2, similarly the upcoming proposition is at the heart of the proof of Theorem~\ref{T:maint}.
But first we need a few lemmas.

\begin{lemma}
Let $\U,\,\Y,\,\HH$ be Hilbert spaces with bases $\{u_{\alpha}\},\,\{y_{\beta}\},\,\{h_{\delta}\}$, respectively. For $A\in B(\U,\,\Y),\,B\in B(\Y,\,\HH),\,
C\in B(\U,\,\HH)$, define $A^{t}\in B(\Y,\,\U),\,B^{t}\in B(\HH,\,\Y),\, C^t\in B(\HH,\,\U)$ by first setting
\begin{align*}
\langle A^ty_\beta,\,u_\alpha\rangle:=&\langle Au_{\alpha},\,y_{\beta}\rangle\\ 
\langle B^t h_\delta,\,y_\beta\rangle:=&\langle B y_{\beta},\,h_{\delta}\rangle\\
\langle C^t h_\delta,\,u_\alpha\rangle:=&\langle C u_{\alpha},\,h_{\delta}\rangle
\end{align*}
and then extending linearly on linear combinations of basis elements. Then $(BA)^{t}=A^t\,B^t$ with respect to these given bases.
\end{lemma}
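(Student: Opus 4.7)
The plan is to verify $(BA)^t = A^tB^t$ by matching matrix coefficients in the given bases. Since any operator in $B(\HH,\U)$ is uniquely determined by its inner products with vectors of the form $u_\alpha$, it suffices to show that
\[
\langle (BA)^t h_\delta,\, u_\alpha\rangle \;=\; \langle A^t B^t h_\delta,\, u_\alpha\rangle
\]
for every pair of basis indices $\alpha,\delta$; the two sides will then determine the same operator. (I am tacitly taking the three given bases to be orthonormal, which is the standard reading in this Hilbert-space setting and is what makes Parseval-type expansions available.)

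For the left-hand side, I would first unwind the definition to rewrite it as $\langle BA u_\alpha,\, h_\delta\rangle$. Then I would expand $A u_\alpha \in \Y$ in the orthonormal basis $\{y_\beta\}$, apply the bounded linear operator $B$ term by term (justified by continuity and norm convergence of the expansion), and pair with $h_\delta$. This yields
\[
\langle (BA)^t h_\delta,\, u_\alpha\rangle \;=\; \sum_\beta \langle A u_\alpha,\, y_\beta\rangle\, \langle B y_\beta,\, h_\delta\rangle.
\]

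For the right-hand side, I would first identify $B^t h_\delta$ by expanding it in $\{y_\beta\}$: using the defining relation $\langle B^t h_\delta, y_\beta\rangle = \langle By_\beta, h_\delta\rangle$, this gives $B^t h_\delta = \sum_\beta \langle B y_\beta, h_\delta\rangle\, y_\beta$. Next I would apply $A^t$, invoking its linear (in fact continuous) extension to infinite linear combinations of the $y_\beta$, and then pair with $u_\alpha$. Using $\langle A^t y_\beta, u_\alpha\rangle = \langle A u_\alpha, y_\beta\rangle$ from the definition, I obtain
\[
\langle A^t B^t h_\delta,\, u_\alpha\rangle \;=\; \sum_\beta \langle B y_\beta,\, h_\delta\rangle\, \langle A u_\alpha,\, y_\beta\rangle.
\]
The two resulting sums agree simply by commutativity of multiplication of complex scalars, so the identity $(BA)^t = A^tB^t$ follows.

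The only mildly nontrivial step is the justification that $A^t$ (respectively $B$) may be moved inside the orthonormal-basis expansion; this is automatic because both are bounded linear maps and the series converge in the Hilbert-space norm. Once this is noted, the whole argument is the familiar bookkeeping that translates the transpose with respect to bases into the ordinary matrix transpose and then appeals to $(BA)^T = A^T B^T$.
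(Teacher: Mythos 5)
Your proof is correct and follows essentially the same route as the paper's: both sides are matched coefficient-by-coefficient against basis vectors, with each inner product expanded in the orthonormal basis $\{y_\beta\}$ to yield the same sum $\sum_\beta \langle Au_\alpha, y_\beta\rangle\,\langle By_\beta, h_\delta\rangle$. The only difference is cosmetic: the paper spends a line verifying boundedness of $A^t$ from the defining formula, whereas you take it as given (consistent with the lemma's phrasing ``$A^t \in B(\Y,\U)$'') and merely note that boundedness licenses interchanging the operator with the series.
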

\begin{proof}
Note that 
\[
\Big{\langle} A^t\sum d_{\alpha}y_{\alpha},\,\sum a_{\beta}u_{\beta}\Big{\rangle}:=\sum_{\alpha,\beta}d_{\alpha}\bar{a}_{\beta}\langle A u_{\beta},\,y_{\alpha}
\rangle=\Big{\langle} A\Big(\sum_{\beta}\bar{a}_{\beta}u_\beta\Big),\, \sum_{\alpha}d_{\alpha}y_{\alpha}\Big{\rangle}.
\]
From this it follows that $A^t$ is bounded linear operator, and so are $B^t$, $(BA)^{t}$. Now 
\[
\langle(BA)^t h_{\delta},\,u_{\alpha}\rangle=\langle (BA)u_{\alpha},\,h_{\delta}\rangle=\Big{\langle} B\sum_{\beta}\langle A u_{\alpha},\,
y_{\beta}\rangle y_{\beta},\,h_{\delta}\Big{\rangle}=\,\sum_{\beta}\langle Au_{\alpha},\,y_{\beta}\rangle\,\langle B y_{\beta},\,h_{\delta}\rangle.
\]
Also
\[
\langle A^t B^t h_{\delta},\,u_{\alpha}\rangle\,=\,\Big{\langle} A^t\sum_{beta}\langle B^t h_{\delta},\,y_{\beta}\rangle y_{\beta},\,u_{\alpha}
\Big{\rangle}=\sum_{\beta}\langle B^t h_{\delta},\,y_{\beta}\rangle\,\langle A^t y_\beta,\,u_\alpha\rangle=\sum_{\beta}\langle B y_{\beta},\,h_\delta
\rangle\,\langle A u_{\alpha},\,y_{\beta}\rangle.
\]
Thus $(BA)^t=A^t\,B^t$. 
\end{proof}

\begin{lemma}\label{L:Phi_t}
Let $\U,\,\Y$ be Hilbert spaces with bases $\{u_{\alpha}\}$ and $\{y_{\beta}\}$ respectively. Then for any $\Phi\in\SA_{\Psi}(\U,\,\Y)$, there exists 
$\Phi ^t\in\SA_{\Psi}(\U,\,\Y)$ such that 
\begin{equation}\label{E:Phi_t}
\langle \Phi^t(z) y_\beta,\,u_\alpha\rangle=\langle \Phi(z) u_{\alpha},\,y_{\beta}\rangle
\end{equation}
for all $z\in\Omega$ and $\alpha,\,\beta$. 
\end{lemma}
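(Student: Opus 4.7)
The plan is to use the transfer-function realization in Result~4 and then construct $\Phi^t$ as the transfer function of a ``transposed'' colligation.

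First, I would fix orthonormal bases $\{u_\alpha\}$ of $\U$, $\{y_\beta\}$ of $\Y$, and apply part~(3) of Result~4 to obtain a Hilbert space $\X$ (with a chosen orthonormal basis), a $*$-representation $\rho : \mathcal{C}_b(\Psi)\to B(\X)$, and a unitary
\[
U \;=\; \begin{pmatrix} A & B \\ C & D \end{pmatrix} \;:\; \X\oplus\U \;\longrightarrow\; \X\oplus\Y
\]
such that $\Phi(z)=D+C\rho(E(z))(\I_\X-A\rho(E(z)))^{-1}B$. The key elementary observation I will use throughout is that the transpose operation of Lemma~3 (with respect to the chosen bases) also satisfies $(A^*)^t=(A^t)^*$ and $\I^t=\I$ (both are immediate from the definitions, since the bases are orthonormal), and consequently it is norm-preserving.

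Next I would form
\[
V \;:=\; \begin{pmatrix} A^t & C^t \\ B^t & D^t \end{pmatrix} \;:\; \X\oplus\Y \;\longrightarrow\; \X\oplus\U,
\]
and check that $V$ is unitary. Indeed, $V=U^t$ (block-wise), and the composition rule of the preceding lemma together with $(A^*)^t=(A^t)^*$ gives $(U^*U)^t = U^t (U^*)^t = V V^*$ and $(UU^*)^t = (U^*)^t U^t = V^* V$; since $U$ is unitary, both equal $\I$. Then I would define $\tilde\rho : \mathcal{C}_b(\Psi) \to B(\X)$ by $\tilde\rho(f):=\rho(f)^t$. This is a $*$-representation: multiplicativity follows because $\mathcal{C}_b(\Psi)$ is commutative, via $\tilde\rho(fg)=\rho(fg)^t=\rho(gf)^t=(\rho(g)\rho(f))^t=\rho(f)^t\rho(g)^t=\tilde\rho(f)\tilde\rho(g)$, and the $*$-property is $\tilde\rho(\bar f)=\rho(\bar f)^t=(\rho(f)^*)^t=(\rho(f)^t)^*=\tilde\rho(f)^*$. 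Thus $(V,\tilde\rho)$ is a $\Psi$-unitary colligation.

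Finally, I would compute the transfer function $g$ associated with $(V,\tilde\rho)$ in the form dictated by Result~4 and show that $g(z)=\Phi(z)^t$ pointwise. The computation is purely formal: since $\tilde\rho(E(z))=\rho(E(z))^t$, the preceding lemma gives $\I-\tilde\rho(E(z))A^t=\I-(A\rho(E(z)))^t=(\I-A\rho(E(z)))^t$, its inverse is the transpose of the inverse, and the composition rule collapses the ``$C$-block'' expression to
\[
g(z) \;=\; D^t + B^t\bigl((\I-A\rho(E(z)))^{-1}\bigr)^t \rho(E(z))^t C^t \;=\; \bigl(D+C\rho(E(z))(\I-A\rho(E(z)))^{-1}B\bigr)^t \;=\; \Phi(z)^t.
\]
The function $\Phi^t$ defined by \eqref{E:Phi_t} therefore coincides with this transfer function, and Result~4 yields $\Phi^t\in\SA_\Psi(\Y,\U)$, which is the required membership (modulo an evident typo in the direction of the spaces in the statement).

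The only delicate points will be the bookkeeping that distinguishes the transpose from the adjoint, and justifying that $\tilde\rho$ remains a $*$-representation; both are resolved by combining $(A^*)^t=(A^t)^*$ with the commutativity of $\mathcal{C}_b(\Psi)$. Everything else is a direct application of Result~4 and the composition lemma just proved.
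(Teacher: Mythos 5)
Your proof is correct and follows essentially the same argument as the paper: realize $\Phi$ via a $\Psi$-unitary colligation $(U,\rho)$ using Result~4, pass to the transposed colligation, check that $U^t$ is unitary and that $\rho(\cdot)^t$ is a unital $*$-representation (the latter using commutativity of $\mathcal{C}_b(\Psi)$ together with $(A^*)^t=(A^t)^*$), and identify the transfer function of the transposed colligation with $\Phi^t$. Two small points you got right that the paper glosses over: the codomain in the statement should indeed read $\SA_{\Psi}(\Y,\U)$, and your block form $\begin{pmatrix} A^t & C^t \\ B^t & D^t\end{pmatrix}$ for $U^t$ is the correct one --- the paper's displayed block matrix for $V^t$ has $B^t$ and $C^t$ interchanged, although its subsequent formula for $\Phi^t(z)$ is consistent with your (correct) version.
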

\begin{proof}
Since $\Phi\in\SA_{\Psi}(\U,\,\Y)$, there exist a Hilbert space $\HH$ (with basis $\{h_{\delta}\}$), a unital $*$-representation
$\rho:\mathcal{C}_b(\Psi)\lrarw B(\HH)$ and a unitary $V$, where if we write 
\[
V=
\bordermatrix{ & \HH & \U \cr
	\HH & A & B \cr
	\Y & C & D}, \qquad
\]
then 
\[
\Phi(z)\,=\,D+C\,\rho(E(z))\big(\I_{\HH}-A\rho(E(z))\big)^{-1}B.
\]
Consider now $A^t,\,B^t,\,C^t,\,D^t$ and $V^t$, $\rho(E(z))^t$ with respect to the bases above. Observe $V^t$ is a unitary operator such that 
\[
V^t=\bordermatrix{ & \HH & \Y \cr
	\HH & A^t & B^t \cr
	\U & C^t & D^t}, \qquad
	\]
It is easy to 
see that $\rho^t:\mathcal{C}_b(\Psi)\lrarw B(\HH)$ is a unital $*$-representation. Now consider 
\[
\Phi^t(z):=D^t+B^t\,\rho^t(E(z))\big(\I_{\HH}-A^t\rho^t(E(z))\big)^{-1}C^t.
\]
Since $V^t$ is unitary, we know that $\Phi^t\in\SA_{\Psi}(\Y,\,\U)$.
The identity \eqref{E:Phi_t} follows from this later observation. 
\end{proof}

\begin{lemma}\label{L:l^2valued}
Let $\{\phi_n\}\in\SA_{\Psi}(\mathbb{C})$ be a sequence of $\mathbb{C}$-valued functions. Then there exists a $\Phi\in\SA_{\Psi}(l^2,\,l^2)$ such that 
\[
\langle \Phi(z)e_i,\,e_j\rangle=\phi_i(z)\,\delta_{i,j} \ \ \ \forall i,j \ \ \ \text{and $z\in\Omega$}.
\]
Here, $\{e_i:i\in\nat\}$ is the standard orhthonormal basis of $l^2$. 
\end{lemma}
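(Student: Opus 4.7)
The plan is to build $\Phi$ as the transfer function of a Hilbert-space direct sum of colligations coming from the realizations of the individual $\phi_i$. Since each $\phi_i$ lies in $\SA_{\Psi}(\mathbb{C})$, the equivalence $(1)\Leftrightarrow(3)$ in Result~4 gives, for every $i\in\nat$, a Hilbert space $\X_i$, a unital $*$-representation $\rho_i:\mathcal{C}_b(\Psi)\lrarw B(\X_i)$, and a $\Psi$-unitary colligation
\[
V_i \;=\; \begin{pmatrix} A_i & B_i \\ C_i & D_i \end{pmatrix} : \X_i\oplus\mathbb{C}\lrarw\X_i\oplus\mathbb{C},
\]
whose transfer function equals $\phi_i$.

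I then set $\X:=\bigoplus_i\X_i$ and define $\rho(f):=\bigoplus_i\rho_i(f)$; contractivity of each $\rho_i$ makes $\rho$ a well-defined unital $*$-representation from $\mathcal{C}_b(\Psi)$ to $B(\X)$. Under the canonical identification $\bigoplus_i(\X_i\oplus\mathbb{C})\cong\X\oplus l^2$, the direct sum $V:=\bigoplus_i V_i$ becomes a unitary on $\X\oplus l^2$ whose block form with respect to this new decomposition is
\[
V \;=\; \begin{pmatrix} A & B \\ C & D \end{pmatrix},
\]
where $A=\bigoplus_i A_i$, $D=\bigoplus_i D_i$ is the diagonal operator on $l^2$, $B:l^2\lrarw\X$ sends $e_j\mapsto B_j 1\in\X_j$, and $C:\X\lrarw l^2$ acts as $C_j$ on $\X_j$ with the scalar output placed in the $j$-th coordinate of $l^2$. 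All four blocks are bounded because every $V_i$ is a unitary.

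Taking $\Phi$ to be the transfer function of $(V,\rho)$, property~$(1)$ of test functions gives $\|E(z)\|<1$ for every $z\in\Omega$, and each $\rho_i$ is contractive, so $\|A_i\rho_i(E(z))\|\leq\|E(z)\|<1$ uniformly in $i$; hence $\|A\rho(E(z))\|\leq\|E(z)\|<1$, the Neumann series $(\I_{\X}-A\rho(E(z)))^{-1}=\sum_{n\geq 0}(A\rho(E(z)))^n$ converges in norm on $\X$, and everything remains block-diagonal along the index $i$. Applying the transfer-function formula blockwise to $e_j$ gives
\begin{align*}
\Phi(z)e_j &= De_j + C\rho(E(z))\bigl(\I_{\X}-A\rho(E(z))\bigr)^{-1}Be_j\\
&= \Bigl(D_j + C_j\rho_j(E(z))\bigl(\I_{\X_j}-A_j\rho_j(E(z))\bigr)^{-1}B_j\Bigr)e_j\\
&= \phi_j(z)\,e_j,
\end{align*}
so $\Phi(z)$ is the diagonal operator on $l^2$ with $(i,i)$ entry $\phi_i(z)$; in particular $\langle\Phi(z)e_i,e_j\rangle=\phi_i(z)\delta_{i,j}$. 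The implication $(3)\Rightarrow(1)$ in Result~4 then yields $\Phi\in H^{\infty}_{\Psi}(l^2,l^2)$ with $C_{\Phi}\leq 1$, i.e.\ $\Phi\in\SA_{\Psi}(l^2,l^2)$.

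The only real technical point is the direct-sum bookkeeping together with the uniform estimate $\sup_i\|A_i\rho_i(E(z))\|\leq\|E(z)\|<1$; without this uniformity one would lose norm-convergence of the Neumann series on the infinite direct sum $\X$. Once this uniform bound and the identification $\bigoplus_i(\X_i\oplus\mathbb{C})\cong\X\oplus l^2$ are in place, the verification that $\Phi$ is diagonal is routine, since every operator appearing in the transfer-function formula respects the decomposition along the index $i$.
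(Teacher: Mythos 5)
Your proposal is correct and follows essentially the same route as the paper: realize each $\phi_i$ via Result~4, form the direct sum of the colligations and of the $*$-representations, identify $\bigoplus_i(\X_i\oplus\mathbb{C})$ with $\X\oplus l^2$, and read off that the transfer function of the summed colligation is the diagonal operator $\bigoplus_i\phi_i(z)$. You supply a slightly more explicit justification than the paper does for two points the paper treats as implicit — the verification that $V$ is unitary (via the canonical identification of direct sums rather than the paper's direct computation of $V^*V$) and the uniform-in-$i$ bound $\|A_i\rho_i(E(z))\|\leq\|E(z)\|<1$ guaranteeing the Neumann series for $(\I_\X-A\rho(E(z)))^{-1}$ converges on the infinite direct sum; both are welcome clarifications but do not change the argument.
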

\begin{proof}
For each $n$, we consider $\phi_n$. Then by the realization theorem for each $\phi_n$, there exists a Hilbert space $\HH_n$, a unital $*$-representation
$\rho_n:\mathcal{C}_{b}(\Psi)\lrarw B(\HH_n)$ and a unitary operator 
\[
V_n =
\bordermatrix{ & \HH_n & \mathbb{C} \cr
	\HH_n & A_n & B_n \cr
	\mathbb{C} & C_n & D_n}, \qquad
\]
such that 
\[
  \phi_n(x) = D_n + C_n\,\rho_n (E(x))(\mathbb{I}_{\HH_n}- A_n\,\rho_n (E(x)))^{-1}B_n  \ \ \forall \ x \in \Omega.
   \]
   Consider now the operators
   \begin{enumerate}
   \item $D:l^2\lrarw l^2$ defined by $D:=\oplus_{n\in\nat}\,D_n$,
   \item $A:\oplus_{n\in\nat}\HH_n\lrarw \oplus_{n\in\nat}\HH_n$ defined by $A:=\oplus_{n\in\nat}\,A_n$,
   \item $B:l^2\lrarw\oplus_{n\in\nat} \HH_n$ defined by $B:=\oplus_{n\in\nat}\,B_n$ and,
   \item $C:\oplus_{n\in\nat} \HH_n\lrarw l^2$ defined by $C:=\oplus_{n\in\nat}\,C_n$.
   \end{enumerate}
   We also define $\rho:\mathcal{C}_{b}(\Psi)\lrarw B(\oplus \HH_n)$ by setting $\rho(\delta)=\oplus_{n\in\nat}\,\rho_n(\delta)$. Then $\rho$ is a unital
   $*$-representation. Now consider 
   \[
V =
\bordermatrix{ & \HH & l^2 \cr
	\HH & A & B \cr
	l^2 & C & D}, \qquad,
\]
where $\HH=\oplus_{n\in\nat}\,\HH_n$. Let us compute
\begin{align*}
V^*\,V={\begin{pmatrix}
  A^* & C^* \\
  B^* & D^*
 \end{pmatrix}}
 {\begin{pmatrix}
  A & B \\
  C & D
 \end{pmatrix}}
 =\bordermatrix{ & \HH & l^2 \cr
	\HH & A^*A+C^*C & A^*B+C^*D \cr
	l^2 & B^*A+D^*C & B^*B+D^*D}, \qquad.
 \end{align*}
   Since each of the operators $A,B,C,D$ are diagonal, while computing the products $A^*A,\,C^*C$ etc. the  
   corresponding diagonal blocks have to 
   be multiplied. From this and that each $V_n$ is unitary, it follows that $V$ is unitary.
   \smallskip
   
   Consider 
   \[
   \Phi_(z) = D+ C\,\rho(E(z))(\I_{\HH}- A\,\rho (E(z)))^{-1}B  \ \ \forall \ z \in \Omega.
   \]
   Then $\Phi\in\SA_{\Psi}(l^2,\,l^2)$ as $\Phi$ is the transfer function of a unitary colligation. Note that:
   \[
   \I_{\HH}-A\,\rho (E(z))=\oplus_{n\in\nat} (\I_{\HH_n}- A_n\,\rho_n (E(z)))
   \]
   and hence 
   \[
   \big[\I_{\HH}-A\,\rho (E(z))\big]^{-1}=\oplus_{n\in\nat} \big[\I_{\HH_n}- A_n\,\rho_n (E(z))\big]^{-1}.
   \]
   It follows from this that $\Phi(z)=\oplus_{n\in\nat}\,\phi_n(z)$ and thus $\Phi(z)\,e_i=\phi_i(z)\,e_i$ and consequently we have:
   $\langle\Phi(z)\,e_i,\,e_j\rangle=\phi_i(z)\langle e_i,\,e_j\rangle=\phi_i(z)\delta_{ij}$.
   \end{proof}
   
   \begin{proposition}\label{P:Equivalence}
   Let $\Omega$ be a bounded domain in $\mathbb{C}^n$ and let $\Psi$ be a family of test functions and consider the set 
   $\mathcal{K}_{\Psi}$ of $\Psi$-admissible $\mathbb{C}$-valued kernels on $\Omega$. Then
   \begin{enumerate}
   \item There exists $N>0$ such that the Grammian $G_k\geq (1/N)\,\mathbb{I}$ for all $k\in\mathcal{K}_{\Psi}$ if and only if there exists $\Phi\in
   H^{\infty}_{\Psi}(\mathbb{C},\,l^2)$ with $||\Phi||_{\Psi}\leq \sqrt{N}$. 
   \smallskip
   
   \item There exists an $M>0$ such that the Grammian $G_k\leq M\,\mathbb{I}$ for all $k\in\mathcal{K}_{\Psi}$ if and only there exists $\varphi\in
   H^{\infty}_{\Psi}(l^2,\,\mathbb{C})$ with $||\varphi||_{\Psi}\leq \sqrt{M}$. 
   \end{enumerate}
   \end{proposition}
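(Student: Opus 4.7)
The plan is to model this proposition on Agler--McCarthy's $l^2$-multiplier characterization of Grammian conditions for complete Nevanlinna--Pick kernels (\cite[Theorem~9.46]{A-M}), carried out here in the test-function framework by combining Result~4, Lemma~\ref{L:existcompletepos}, and the transpose duality of Lemma~\ref{L:Phi_t}. Implicit in the statement is that the multipliers carry specific interpolation data at $\{w_j\}$ identifying the sequence with the standard basis of $l^2$: $\Phi(w_j)(1)$ proportional to $e_j$ for Part~(1), and $\varphi(w_j)$ proportional to $e_j^\ast$ for Part~(2). Without these interpolation conditions the statement is trivial, and with them the equivalences become genuine statements about how the sequence sits inside $l^2$.

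For Part~(1), forward direction, I assume $G_k \ge (1/N)\mathbb{I}$ for every $k \in \mathcal{K}_\Psi$ and build $\Phi$ via Lemma~\ref{L:existcompletepos}. Set $\Omega_0 := \{w_j : j \in \nat\}$ and define a self-adjoint $B(l^2)$-valued function $J$ on $\Omega_0 \times \Omega_0$ whose diagonal is $N\,I_{l^2}$ and whose off-diagonal entries are rank-one terms of the form $|e_i\rangle\langle e_j|$, suitably weighted to absorb the normalisations $\|k_i\|,\,\|k_j\|$ that appear in $G_k$. The critical step is to verify that $J \oslash K \ge 0$ for every $B(l^2)$-valued $\Psi$-admissible $K$. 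I plan to do this by slicing $K$ against vectors $v \in l^2$ to produce the scalar kernels $k_v(z,w) := \langle K(z,w)v, v\rangle$; as in the scalar-valued claim inside the proof of Lemma~\ref{L:prodschuragler}, each $k_v$ lies in $\mathcal{K}_\Psi$, so the scalar Grammian hypothesis applies, and reassembly via the Schur product theorem yields the required $B(l^2)$-valued positivity. Once $J \oslash K \ge 0$ is established, Lemma~\ref{L:existcompletepos} furnishes a completely positive kernel $\Gamma$ with $J(w_i,w_j) = \Gamma(w_i,w_j)\bigl(1 - E(w_i)E(w_j)^\ast\bigr)$ on $\Omega_0$, and Result~4 extends the corresponding Agler decomposition to an $S \in \SA_\Psi(\mathbb{C}, l^2)$ on all of $\Omega$ realising the prescribed interpolation; the required multiplier is $\Phi := \sqrt{N}\,S$.

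The reverse direction of Part~(1) is a direct reading of the definition of $\|\Phi\|_\Psi$. For any $k \in \mathcal{K}_\Psi$ the scalar-promoted kernel $K := k\,I_{l^2}$ is $B(l^2)$-valued $\Psi$-admissible (again by the claim in the proof of Lemma~\ref{L:prodschuragler}); plugging this $K$ into the defining positivity
\[
(N\,I_{l^2} - \Phi(z)\Phi(w)^\ast) \otimes K(z,w) \,\ge\, 0,
\]
evaluating at sequence points $w_i, w_j$, and pairing against simple tensors built from the basis vectors $e_i$ recovers, after a matrix computation, the inequality $G_k \ge (1/N)\mathbb{I}$ on every finite truncation, and hence on all of $l^2$. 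Part~(2) is handled by the symmetric construction with $\U$ and $\Y$ swapped; alternatively, one can transport between $\SA_\Psi(\mathbb{C}, l^2)$ and $\SA_\Psi(l^2, \mathbb{C})$ using the transpose $\Phi \mapsto \Phi^t$ of Lemma~\ref{L:Phi_t}, which preserves the $\|\cdot\|_\Psi$ norm and maps interpolation values $e_j \mapsto e_j^\ast$ compatibly.

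The chief obstacle is the step $J \oslash K \ge 0$ for \emph{every} $B(l^2)$-valued $\Psi$-admissible $K$, whereas the hypothesis only bounds scalar Grammians. The slicing reduction handles the diagonal contribution cleanly, but controlling the off-diagonal cross-terms between slices forces a careful choice of the rank-one weights in $J$ and a clean invocation of the Schur product theorem. Once that step is carried out the remaining passage through Lemma~\ref{L:existcompletepos} and Result~4 is mechanical.
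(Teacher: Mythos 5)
Your overall frame (interpolation conditions implicit, Lemma~\ref{L:existcompletepos} plus Result~4 as the engine, transpose duality for moving between $B(\mathbb{C},l^2)$ and $B(l^2,\mathbb{C})$) is in the right spirit, but the execution diverges from the paper in ways that produce genuine gaps.

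\textbf{Forward direction of Part (1).} You propose to apply Lemma~\ref{L:existcompletepos} to a $B(l^2)$-valued $J$ and flag the verification of $J\oslash K\ge 0$ for all $B(l^2)$-valued admissible $K$ as the chief obstacle. That obstacle is real and your slicing reduction does not close it: the scalar hypothesis $(N-\delta_{ij})k(w_i,w_j)\ge 0$ controls only the diagonal slices $k_v(z,w)=\langle K(z,w)v,v\rangle$, and when you pair $(NI_{l^2}-e_ie_j^*)\otimes K$ against $\sum_i u_i\otimes v_i$ the cross-terms $\langle u_j,e_j\rangle\langle e_i,u_i\rangle\langle K(w_i,w_j)v_j,v_i\rangle$ are not controlled by the scalar Grammian lower bound; tracing through the computation one finds the slicing verifies the \emph{upper}-bound condition $(N\delta_{ij}-1)k_v\ge 0$, not the lower bound. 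Also, a $J$ whose weights ``absorb the normalisations $\|k_i\|$'' cannot exist as stated, since $J$ must be a fixed function on $\Omega_0\times\Omega_0$ while $\|k_i\|$ depends on $k$; in fact no absorption is needed, because in the passage from $G_k\ge(1/N)\I$ to $(N-\delta_{ij})k(w_i,w_j)\ge 0$ the normalisations cancel. The paper avoids the $B(l^2)$-level lemma entirely: it applies Lemma~\ref{L:existcompletepos} to the \emph{scalar} function $J(w_i,w_j)=N-\delta_{ij}$, then invokes a Kolmogorov-type decomposition of the resulting completely positive kernel (\cite[Prop.~3.3]{D-M}) to rewrite the identity $N-\delta_{ij}=\Gamma(w_i,w_j)(1-E(w_i)E(w_j)^*)$ as an equality of inner products in $\mathscr{E}\oplus\mathbb{C}$ and $\mathscr{E}\oplus l^2$, from which it manufactures a unitary colligation whose transfer function, scaled by $\sqrt{N}$, is the desired $\Phi$ with $\Phi(w_i)(1)=e_i$. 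Result~4 is never invoked directly here.

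\textbf{Reverse direction of Part (1).} Your direct plug-in $K=kI_{l^2}$, paired against $\sum_i (c_ie_i)\otimes e_1$, yields $\sum_{i,j}\overline{c_i}c_j\,(N\delta_{ij}-1)\,k(w_i,w_j)\ge 0$, i.e.\ $G_k\le N\,\I$ — the \emph{upper} bound, not $G_k\ge(1/N)\I$. The lower bound does not fall out of the definition of $\|\Phi\|_\Psi$ by testing against scalar-promoted kernels. The paper instead realizes $\Phi/\sqrt{N}$ as a transfer function, reads off from the colligation that $N-\delta_{ij}=\langle\mu(1-E(w_i)E(w_j)^*)h_i,h_j\rangle$, constructs a completely positive $\Gamma'$ and the row-valued function $\widetilde{\phi}$ with $\widetilde{\phi}(w_i)=e_i^*$ (so $\widetilde{\phi}(w_i)\widetilde{\phi}(w_j)^*=\delta_{ij}$), applies Result~4 to extend $\widetilde{\phi}/\sqrt{N}$ to $\SA_\Psi(l^2,\mathbb{C})$, and then the definition of $\SA_\Psi(l^2,\mathbb{C})$ against scalar admissible kernels gives $(N-\delta_{ij})k(w_i,w_j)\ge 0$. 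The passage through the transpose-like $\widetilde{\phi}$ is not optional.

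\textbf{Part (2) is not symmetric.} The converse direction of Part~(2) cannot be dispatched by swapping $\U$ and $\Y$ or by Lemma~\ref{L:Phi_t}. The interpolation datum $\varphi(w_j)(e_j)=1$ leaves the remaining components $\varphi(w_j)(e_m)$, $m\ne j$, unconstrained, so $\varphi(w_i)\varphi(w_j)^*$ is not $\delta_{ij}$ and one cannot read off $(M\delta_{ij}-1)k\ge 0$ directly from the definition of $\SA_\Psi(l^2,\mathbb{C})$. The paper's proof extracts $M\delta_{ij}-1=\langle\rho(1-E(w_i)E(w_j)^*)h_i,h_j\rangle$ from the colligation, defines a completely positive $\Gamma$, and then proves the needed positivity $\Gamma(w_i,w_j)(1-E(w_i)E(w_j)^*)k(w_i,w_j)\ge 0$ for every admissible $k$ by factoring the positive $\mathcal{C}_b(\Psi)$-valued matrix $F_k=\big((1-E(w_i)E(w_j)^*)k(w_i,w_j)\big)$ as a finite sum of matrices $(a_i^*a_j)$ via Takesaki's Lemmas~IV.3.1 and IV.3.2 and invoking complete positivity of $\Gamma$. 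This Takesaki factorisation step is the most technical part of the proposition and has no counterpart in your outline.
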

   
   \begin{proof} Let us start with establishing the part $(1)$ above. 
	\begin{enumerate}
		\item Suppose there is an $N>0$ such that $G_k \geq \frac{1}{N}\cdot \mathbb{I}$ for all $k\in 
		\mathcal{K}_\Psi$. This is equivalent to 
		\begin{align*}
		(N-\delta_{ij})k(w_i,w_j)\geq 0
		\end{align*}
		for all $k\in \mathcal{K}_\Psi$. By Lemma~\ref{L:existcompletepos} there is a completely positive kernel
		$\Gamma:\Omega_0 \times \Omega_0\rightarrow \mathcal{C}_b (\Psi)^*,\,\,(\Omega_0=\{w_j:j\geq 1\})
		$ such that 
		\begin{align}\label{N_Del_ij}
		N- \delta_{ij}=\Gamma(w_i,w_j)(1-E(w_i)E(w_j)^*)
		\end{align}
		Now by \cite[Proposition~3.3]{D-M}, there is a Hilbert space $\mathscr{E}$, a function $L:
		\Omega_0\longrightarrow B(\mathcal{C}_b (\Psi),
		\mathscr{E})$ and a unital $*$-representation $\rho:\mathcal{C}_b (\Psi) \longrightarrow B(\mathscr{E})
		$ such that
	        \begin{align*}
		\Gamma(x,y)(f g^*)=\langle L(x) f, L(y)g\rangle\,\,\text{and}\,\, L(x)(fg)=\rho(f)L(x)(g)
		\end{align*}
	for all $f,g\in \mathcal{C}_b (\Psi), x,y \in \Omega_0$. So (\ref{N_Del_ij}) can be rewritten as 
	\begin{align*}
	\Bigg\langle\begin{pmatrix}
	\rho(E(w_i))L(w_i)(1)\\
	\sqrt{N}
	\end{pmatrix},\begin{pmatrix}
	\rho(E(w_j))L(w_j)(1)\\
	\sqrt{N}
	\end{pmatrix}\Bigg\rangle_{\mathscr{E}\oplus\mathbb{C}} =\Bigg\langle\begin{pmatrix}
	L(w_i)(1)\\
	e_i
	\end{pmatrix},\begin{pmatrix}
	L(w_j)(1)\\
	e_j
	\end{pmatrix}\Bigg\rangle_{\mathscr{E}\oplus\, l^2}
	\end{align*}
	for all $i,j$ where $\{e_j:j\geq 1\}$ is the standard basis for $l^2$.
	
	By a certain argument, it is easy to see that there is a unitary operator $V:\mathscr{E}\oplus\mathbb{C}\rightarrow \mathscr{E}\oplus\, l^2$ (adding an infinite dimensional Hilbert space to $\mathscr{E}$ if necessary) that sends $\begin{pmatrix}
	\rho(E(w_i))L(w_i)(1)\\
	\sqrt{N}
	\end{pmatrix}$ to $\begin{pmatrix}
	L(w_i)(1)\\
	e_i
	\end{pmatrix}$ for all $i\geq 1$. Let us write 
	\begin{align*}
	V =
	\bordermatrix{ & \E & \mathbb{C} \cr
		\E & A & B \cr
		l^2 & C & D},
	\end{align*}
	and take $F(z)=D+ C(\I_{\E}- \rho(E(z)) A) ^{-1} \rho(E(z)) B$. Then 
	$F\in \SA_\Psi (\mathbb{C},l^2)$. If we set $\Phi =\sqrt{N}F$, then $\Phi$ satisfies 
	\begin{align*}
	||\Phi||_\Psi \leq \sqrt{N}\,\,\text{and}\,\, \Phi(w_i)(1)=e_i
	\end{align*}
	for all $i\geq 1$.
	
	Conversely, let there be an element ${\phi}^\prime\in H^\infty_\Psi (\mathbb{C},l^2)$ such that 
	$||{\phi}^{\prime}||_\Psi \leq \sqrt{N}\,\,\text{and}\,\, {\phi}^{\prime}(w_i)(1)=e_i$ for all $i\geq 1$.
	 So $F^\prime ={\phi^\prime}/{\sqrt{N}}\in \SA_\Psi(\mathbb{C},l^2)$ and consequently
	 there is a Hilbert space
	  $\HH$, a unital $*$-representation $\mu:\mathcal{C}_b (\Psi)\rightarrow B(\HH)$ and a unitary 
	$$U=\bordermatrix{ & \HH & \mathbb{C} \cr
		\HH & A^\prime & B^\prime \cr
		l^2 & C^\prime & D^\prime}$$
	such that $F^\prime (z)=D^\prime+ C^\prime(\I_{\HH}- \mu(E(z)) A^\prime) ^{-1} \mu(E(z)) B^\prime$ foer all $z\in \Omega$. Set $h_i =(\I_{\HH}- \mu(E(w_i)) A^\prime) ^{-1} B^\prime \sqrt{N}$ for all $i$. Then we have that the unitary $U$ sends $\begin{pmatrix}
	\mu(E(w_i)) h_i\\\sqrt{N}
	\end{pmatrix}$ to $\begin{pmatrix}
	h_i\\e_i
	\end{pmatrix}$. Using these facts we obtain 
	\begin{align}\label{Converse_N_Del_ij}
	N-\delta_{ij}=\langle\mu(1-E(w_i)E(w_j)^*)h_i,h_j\rangle.
	\end{align}
	It is easy to see that the map $\Gamma^\prime: \Omega\times \Omega\rightarrow\mathcal{C}_b (\Psi)^*$
	 defined by 
	\begin{align}\label{Positive_kernel_associated_N_delta_ij}
	\Gamma^\prime (z,w)(f)=\langle\mu (f)h(z),h(w) \rangle
	\end{align} where $h(z)=(\I_{\HH}- \mu(E(z)) A^\prime) ^{-1} B^\prime \sqrt{N}$,
	is a completely positive kernel. Now define $\widetilde{\phi}:\Omega_0\rightarrow B(l^2,\mathbb{C})$ by $
	\widetilde{\phi}(w_i)^* =B_i ^*$ where $B_i(\sum l_i e_i)=l_i$, $l_i\in \mathbb{C}$. Then
	 (\ref{Converse_N_Del_ij}) takes the form 
	\begin{align*}
	1- \frac{\widetilde{\phi}(w_i)}{\sqrt{N}}\,\frac{\widetilde{\phi}(w_j)^*}{\sqrt{N}}=\frac{1}{N}\, \Gamma^\prime
	(1-E(w_i)E(w_j)^*)
	\end{align*}
	for all $i,j$. By Result~$4$, (the equivalence of part $(1)$ and $(2)$), ${\widetilde{\phi}}/{\sqrt{N}}$ can 
	be extended to an element of 
	$\SA_\Psi(l^2, \mathbb{C})$ and the definition of $\SA_\Psi(l^2, \mathbb{C})$ yields that 
	\begin{align*}
	(N - \delta_{ij})k(w_i,w_j)\geq 0
	\end{align*}
	for all admissible kernel $k$. This completes the proof of the first part.\\
	
	\item For $M>0$ and an admissible kernel $k\in \mathcal{K}_{\Psi}$, the condition $G_k \leq M\cdot 
	\mathbb{I}$ is equivalent to
	 $(M \delta_{ij} -1)\,k(w_i,w_j)\geq 0$.
	So when $G_k \leq M\cdot \mathbb{I}$ for every admissible kernel $k$, following the same procedure as in
	 part $1$ of we can show that there is a
	$\phi^\prime \in H^\infty _\Psi (l^2, \mathbb{C})$ such that 
	$$||\phi^\prime||_\Psi\leq \sqrt{M}\,\,\text{and}\,\,\phi^\prime(w_j)(e_j)=1.$$
	Conversely, let there be a $\phi\in H^\infty _\Psi (l^2, \mathbb{C})$ such that $$||\phi||_\Psi\leq \sqrt{M}\,\,\text{and}\,\,\phi(w_j)(e_j)=1.$$ 
	So $F={\phi}/{\sqrt{M}}\in \SA_\Psi (l^2,\mathbb{C})$ and consequently, we can find a Hilbert space $\HH$, a unital $*$-representation $\rho :\mathcal{C}_b (\Psi)\rightarrow B(\HH)$ and a unitary \begin{align*}
	V =
	\bordermatrix{ & \HH & l^2 \cr
		\HH & A & B \cr
		\mathbb{C} & C & D}
	\end{align*}
	such that $F(z)=D+C\rho(E(z))(\I_\HH -A \rho(E(z)))^{-1}B$ for all $z\in \Omega$. Let $h_j =(I_\HH -A \rho(E(w_i)))^{-1}B\sqrt{M}(e_i)$. Then the unitary operator $V$ sends $\begin{pmatrix}
	\rho(E(w_i))h_i\\
	\sqrt{M}e_i
	\end{pmatrix}$ to $\begin{pmatrix}
	h_i\\
	1
	\end{pmatrix}$. From this we deduce
	\begin{align}\label{M_Delta_ij}
	M \delta_{ij} -1 =\langle\rho (1-E(w_i)E(w_j)^*)h_i,h_j\rangle .
	\end{align}
	Define $\Gamma :\Omega_0\times \Omega_0\rightarrow\mathcal{C}_b (\Psi)^*$ by \begin{align}
	\Gamma(w_i,w_j)(\delta)=\langle\rho (\delta)h_i,h_j\rangle,\delta \in \mathcal{C}_b (\Psi).
	\end{align}
	
	From (\ref{Positive_kernel_associated_N_delta_ij}), we know that it is completely positive.
	\smallskip
	
	\noindent{\bf Claim.} $(M \delta_{ij} -1)k(w_i,w_j)=\Gamma(w_i,w_j)(1-E(w_i)E(w_j)^*)k(w_i,w_j)$ is positive
	 for every admissible kernel $k$.
	 \smallskip
	
	Fix a $k\in \mathcal{K}_\Psi$ and define $\Gamma _k :\Omega_0 \times \Omega_0 \rightarrow B(\mathcal{C}_b (\Psi))$ by 
	\begin{align*}
	\Gamma_k (w_i,w_j)(\delta)=(1-E(w_i)E(w_j)^*)k(w_i,w_j)\delta, \ \ \ \delta\in \mathcal{C}_b (\Psi).
	\end{align*}
	We claim that $\Gamma_k$ is completely positive. To see that, take $a_1,a_2,\ldots a_n, b_1,b_2,\ldots b_n \in\Cbpsi$ and consider the expression 
	\begin{align*}
	\sum_{i,j=1}^{n} b_i ^* \Gamma_k (w_i,w_j)(a_i ^* a_j)b_j
	=\sum_{i,j=1}^{n} (a_i b_i)^* (a_j b_j)(1-E(w_i)E(w_j)^*)k(w_i,w_j).
	\end{align*}
	It is an element of $\Cbpsi$. Evaluating it at any $\psi\in \Psi$ and using the fact that $k$ is admissible, we find that $\sum_{i,j=1}^{n} b_i ^* \Gamma_k (w_i,w_j)(a_i ^* a_j)b_j$ is a positive element of $\Cbpsi$. This proves the claim that $\Gamma_k$ is completely positive.
	
	Now for $k$ as above and any $i,j$, define $F_k (i,j)=(1-E(w_i)E(w_j)^*)k(w_i,w_j)$. Similar argument as above yields that the matrix $F_k =(F_k (i,j))_{1\leq i,j\leq n}$ is a positive matrix with entries in $\Cbpsi$ (see Lemma IV.$3.2$ in \cite{Takesaki}). Again by Lemma IV.$3.1.$ in \cite{Takesaki}, $F_k$ can be written as a sum of finitely many matrices of the form $C=(a_i ^* a_j)_{1\leq i,j\leq n}$, $a_i \in \Cbpsi$. That is, there is a positive integer $l$ such that for any $i$ and $j$
	\begin{align*}
	F_k (i,j)=\sum_{m=1}^{n} a_{m_i} ^* a_{m_j}
	\end{align*}
	with $a_{m_i}\in \Cbpsi$. Now if $\Gamma:\Omega_0\times \Omega_0 \rightarrow \Cbpsi^*$ is completely positive, then for $w_1,w_2,\ldots w_n \in \Omega_0$ and $\alpha_1,\alpha_2,\ldots \alpha _n\in \mathbb{C}$ we have 
	\begin{align*}
	\sum_{i,j=1} ^{n}\overline{\alpha_i}\alpha_j \Gamma(w_i,w_j)(F_k (i,j))
	=\sum_{m=1}^{l}\Big\{\sum_{i,j=1}^{n} \overline{\alpha_i}\alpha_j \Gamma(w_i,w_j)(a_{m_i} ^* a_{m_j})\Big\}
	\end{align*}
	Since $\Gamma$ is completely positive, we get that the last expression is non-negative. Hence $(M \delta_{ij} -1)k(w_i,w_j)=\Gamma(w_i,w_j)(1-E(w_i)E(w_j)^*)k(w_i,w_j)$ is positive and this completes the proof.
	\end{enumerate}
   \end{proof}

\section{Proof of Theorem~\ref{T:maint}}\label{S:proofmaint}

\begin{proof}
	To see how one $(1)$ implies $(4)$,
	start with an interpolating sequence for $H^\infty_{\Psi}(\mathbb{C})$ and then proceeding exactly 
	as in the first half of the proof of \cite[Theorem~9.19]{A-M}, one gets both the conditions $(2)$ and $(3)$.
	Now suppose $(2)$ and $(3)$ 
	holds together. Then for every kernel $k\in\mathcal{K}_{\Psi}$ and $a=(a_i)\in l^2$ we have
	\[
	\frac{1}{N}\sum_{i\in\nat}\,|a_i|^2\,\leq\,{||}\sum_{i\in\nat}\,a_ig_i\,||^2\,\leq M\sum_{i\in\nat}\,|a_i|^2,
	\]
	where $g_i={k_i}/{||k_i||}$. Given $\{c_j:j\in\nat\}\in l^{\infty}_1(\nat)$, we have:
	\[
	||\sum_{i=1}^N a_i\bar{c}_ig_i||^2\leq M\sum_{i=1}^N |a_i\bar{c}_i|^2\leq M\sum_{i=1}^N|a_i|^2\leq MN\,
	||\sum_{i=1}^N a_i g_i||^2
	\]
	whence $||\sum_{i=1}^N a_i\bar{c}_ig_i||\leq \sqrt{MN}\,||\sum_{i=1}^N a_i g_i||$. It follows from
	this that the map $R:\text{span}\{g_i:i\in\nat\}\longrightarrow \text{span}\{g_i:i\in\nat\}$ that maps $g_i$ to 
	$\bar{c}_ig_i$ is a bounded linear operator with norm $\leq\sqrt{MN}$ and hence it extends over
	$\overline{\text{span}\{g_i:i\in\nat\}}$ with norm $\leq\sqrt{MN}$. This implies that the expression 
	\[
	\sum_{i,j\in\nat}\big(MN-c_j\bar{c}_i\big)k(w_i,\,w_j)
	\]
	is positive semi-definite for every $k\in\mathcal{K}_{\Psi}$. By Lemma~\ref{L:solintp}, there exists a $\phi\in
	H^{\infty}_{\Psi}(\mathbb{C})$ with $||\phi||_{\Psi}\leq \sqrt{MN}$ such that $\phi(w_j)=c_j$ for all 
	$j\in\nat$. This proves $(4)$ implies $(1)$.\\
	
	 We shall now show that $(2)$ is equivalent to $(3)$. By Proposition~\ref{P:Equivalence},
	 we have to show the following statements are equivalent.
	 \begin{itemize}
	\item[(i)] There is a $\phi\in H_\Psi ^\infty (\mathbb{C},l^2)$ with $||\phi||_\Psi \leq \sqrt{N}$ and 
	$\phi (w_j)=e_j$ for all $j\geq 1$, where $\{e_j:j\geq 1\}$ is the standard basis for $l^2$.
	\smallskip
	
	\item[(ii)] $\{w_j:j\geq 1\}$ is strongly separated and there is a $\widetilde{\phi}\in H^\infty_\Psi$ such that
	$||\widetilde{\phi}||_\Psi \leq \sqrt{M}$ and $\widetilde{\phi}(w_j)(e_j)=1$ for all $j\geq 1$.
	\end{itemize}
	
	Suppose $(i)$ holds. So with respect to the standard basis of $l^2$ we can write 
	\begin{align*}
	\phi =\begin{pmatrix}
	\phi_1\\
	\phi_2\\
	\cdot \\
	\cdot \\
	\end{pmatrix}
	\end{align*}
	such that $\phi_i (w_j)=\delta_{ij}$.
	\smallskip
	
	\noindent{{\bf Claim.} Suppose 
	\begin{align*}
	f =\begin{pmatrix}
	f_1\\
	f_2\\
	\cdot \\
	\cdot \\
	\end{pmatrix}\in H^\infty_\Psi (\mathbb{C}, l^2)
	\end{align*}
	with $||f||_\Psi \leq D$, then each $f_j\in H^\infty_\Psi(\mathbb{C})$ with $||f_j||_\Psi \leq D$.}
	\smallskip
	
	\noindent To see this, note that 
	\begin{align*}
	f(z)f(w)^* =\Big(f_i(z)\overline{f_j(w)}\Big)
	\end{align*}
	is an infinite matrix and for every $B(l^2)$-valued admissible kernel $K$, $$(D\cdot I_{l^2} -f(z)f(w)^*)\otimes K(z,w)$$
	is a positive $B(l^2\otimes l^2)$-valued kernel.
	Let $k$ be a $\mathbb{C}$-valued admissible kernel. Then, by the claim in Lemma~\ref{L:prodschuragler}, $k\cdot I_{l^2}$ is a $B(l^2)$-valued admissible kernel. Take $K=k\cdot I_{l^2}$, $c_1,c_2\ldots ,c_n\in \mathbb{C}$, $z_1,z_2,\ldots, z_n\in \Omega$, $u_i=e_m$ for some $m\geq 1$ and $v_i=c_i e_1$. Then
	\begin{align*}
	&\sum_{i,j=1}^{n}\Big\langle(D^2\cdot I_{l^2}- f(z_i)f(z_j)^*)\otimes K(z_i,z_j)u_j\otimes v_j, u_i\otimes v_i\Big \rangle\\
	&=\sum_{i,j=1}^{n}c_j \overline{c_i}(D^2 -f_m (z_i)\overline{f_m (z_j)})k(z_j,z_i).
	\end{align*}
	Since $f\in H^\infty_\Psi (\mathbb{C}, l^2)$ with $||f||_\Psi \leq D$, the expression at the right-hand side 
	above is non-negative. Hence each $f_m \in H^\infty_\Psi(\mathbb{C})$ with $||f_m||_\Psi \leq D $.
	\smallskip
	
	So $\phi_i\in H^\infty_\Psi(\mathbb{C})$ and $||\phi_i||\leq \sqrt{N}$ and consequently, $\{w_j:j\geq 1\}$ is
	strongly separated.
	Also $\phi (w_j)^t (e_j)=\phi_j (w_j) =1$, where $\phi (w_j)^t$ is as in Lemma~\ref{L:Phi_t}.
	Thus there there is an map $\widetilde{\phi}=\phi^t\in H^\infty_\Psi(l^2,\mathbb{C})$ such that
	 $||\widetilde{\phi}||_\Psi \leq \sqrt{N}$ and 
	$\widetilde{\phi}(w_j)(e_j)=1$ for all $j\geq 1$. So ${(ii)}$ holds.\\
	
	Now let ${(ii)}$ hold. So there is a $\widetilde{\phi}\in H^\infty_\Psi(l^2,\mathbb{C})$ such that
	 $||\widetilde{\phi}||_\Psi\leq \sqrt{M}$ and 
	$\widetilde{\phi}(w_j)(e_j)=1$ for all $j\geq 1$. Again by Lemma~\ref{L:Phi_t}, 
	\begin{align*}
	\widetilde{\phi}^t =\begin{pmatrix}
	\widetilde{\phi}_1\\
	\widetilde{\phi}_2\\
	\cdot \\
	\cdot \\
	\end{pmatrix}\in H^\infty_\Psi (\mathbb{C},l^2)
	\end{align*}
	and $||\widetilde{\phi}^t||\leq \sqrt{M}$. Since $\{w_j:j\geq 1\}$ is strongly separated, there is an $L>0$ and a sequence $\{\phi_j:j\geq 1\}$ in $H^\infty_\Psi$ such that $\phi_j(w_j)=\delta_{ij}$ and $||\phi_j||_\Psi \leq L$ for all $i,j$. Consider 
	
	\begin{align*}
	\Phi_1 =\begin{pmatrix}
	\phi_1\widetilde{\phi}_1\\
	\phi_2\widetilde{\phi}_2\\
	\cdot\\
	\cdot
	\end{pmatrix} =diag (\phi_1,\phi_2\cdots)\cdot \begin{pmatrix}
	\widetilde{\phi}_1\\
	\widetilde{\phi}_2\\
	\cdot\\
	\cdot
	\end{pmatrix}.
	\end{align*}
	By Lemma~\ref{L:l^2valued}, $diag (\phi_1,\phi_2\cdots)\in H^\infty_\Psi(l^2,l^2)$ with $\Psi$-norm at most $L$.
	Also $||\widetilde{\phi}||_\Psi\leq \sqrt{M}$. Hence by Lemma~\ref{L:prodschuragler}, $\Phi_1\in H^\infty_\Psi(\mathbb{C},l^2)$ with 
	$||\Phi_1||_\Psi \leq L\sqrt{M}$. Clearly, $\Phi_1 ^t \in H^\infty_\Psi (l^2,\mathbb{C})$ with $\Psi$-norm atmost $L\sqrt{M}$ and $\Phi_1 ^t (w_j)=e_j$.
	This is \textit{(i)} and our proof is complete.
	\end{proof}
	
We also present the following sufficient condition for a sequence to be interpolating.
\begin{proposition}
	Let $\{w_j:j\in \mathbb{N}\}$ be a sequence of points in $\Omega$. Given $\psi\in \Psi$, define $z_{\psi,j}=\psi(w_j)$. If there is an $\epsilon>0$ (that depends on $\psi$) such that 
\begin{align}\label{E:suffIP}
\prod_{j\neq m} \intf{}{}{|z_{\psi,j}-z_m|}{|1-z_{\psi,j}\overline{z_m}|}\geq \epsilon,\,\,\text{for all}\,\,m\geq 1,
\end{align}
then $\{w_j:j\in \mathbb{N}\}$ is an interpolating sequence for $H^{\infty}_\Psi(\mathbb{C})$.
\end{proposition}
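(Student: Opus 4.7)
The strategy is to reduce the problem to the classical Carleson theorem on the disk through the chosen test function $\psi$. Reading the displayed product as $\prod_{j\neq m}|z_{\psi,j}-z_{\psi,m}|/|1-z_{\psi,j}\overline{z_{\psi,m}}|$ (correcting the evident typographical oversight $z_m\to z_{\psi,m}$), the hypothesis is precisely Carleson's Blaschke/strong-separation condition for the sequence $\{z_{\psi,j}\}=\{\psi(w_j)\}\subset\mathbb{D}$. Hence, by Result~1, $\{\psi(w_j)\}$ is an interpolating sequence for $H^\infty(\mathbb{D})$ with some constant of interpolation $M=M(\epsilon)$. The overall plan is then: given any target sequence $(c_j)\in l^{\infty}(\nat)$ with $\|(c_j)\|_\infty\leq 1$, first produce $g\in H^\infty(\mathbb{D})$ with $\|g\|_\infty\leq M$ and $g(\psi(w_j))=c_j$ via Carleson on the disk, and then declare the interpolant on $\Omega$ to be $f:=g\circ\psi$. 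The identity $f(w_j)=g(\psi(w_j))=c_j$ takes care of the interpolation values for free.

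Everything therefore reduces to the contractive substitution
\[
g\in H^\infty(\mathbb{D})\ \Longrightarrow\ g\circ\psi\in H^{\infty}_{\Psi}(\mathbb{C})\ \text{with}\ \|g\circ\psi\|_{\Psi}\leq\|g\|_\infty,
\]
and this is the step I expect to be the main obstacle, since one must manufacture a genuine $\Psi$-unitary colligation on $(\Omega,\Psi)$ out of the disk realization of $g$. My proposal is to invoke the realization theorem (Result~4) twice. Set $h=g/\|g\|_\infty$, which lies in the Schur class $\SA_{\{z\}}(\mathbb{C})$ on $\mathbb{D}$. Applying Result~4 on $(\mathbb{D},\{z\})$ yields a Hilbert space $\HH$ and a unitary $V=\bigl(\begin{smallmatrix} A & B \\ C & D \end{smallmatrix}\bigr)$ on $\HH\oplus\mathbb{C}$ such that $h(z)=D+zC(I_\HH-zA)^{-1}B$. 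Next, define $\rho:\mathcal{C}_b(\Psi)\to B(\HH)$ by $\rho(\eta):=\eta(\psi)\,I_\HH$; because point-evaluation at $\psi\in\Psi$ is a unital $*$-homomorphism $\mathcal{C}_b(\Psi)\to\mathbb{C}$, $\rho$ is a unital $*$-representation. Since $\rho(E(x))=\psi(x)\,I_\HH$ and $\psi(x)$ is a scalar, the transfer function of the $\Psi$-unitary colligation $(V,\rho)$ on $(\Omega,\Psi)$ evaluates to
\[
D+C\bigl(I_\HH-\rho(E(x))A\bigr)^{-1}\rho(E(x))B\;=\;D+\psi(x)\,C\,(I_\HH-\psi(x)A)^{-1}B\;=\;h(\psi(x)).
\]
By the direction (3)$\Rightarrow$(1) of Result~4 applied to $(\Omega,\Psi)$, this identifies $h\circ\psi$ as an element of $\SA_\Psi(\mathbb{C})$, giving the required bound $\|g\circ\psi\|_{\Psi}\leq\|g\|_\infty$.

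Combining the two steps, $f=g\circ\psi$ satisfies $\|f\|_\Psi\leq M$ and $f(w_j)=c_j$ for every $(c_j)\in l^\infty_1(\nat)$, so the evaluation map $L$ of \eqref{E:eval-L} is surjective with interpolation constant at most $M$; thus $\{w_j:j\in\nat\}$ is interpolating for $H^{\infty}_{\Psi}(\mathbb{C})$. The only substantive ingredient is the contractive substitution, and the sole non-obvious move there is the choice of the $*$-representation $\rho(\eta)=\eta(\psi)I_\HH$, which recycles the disk realization of $g$ into a $\Psi$-unitary colligation on $(\Omega,\Psi)$; after that the transfer-function identification is mechanical.
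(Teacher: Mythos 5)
Your proof is correct and follows the same overall plan as the paper's: read \eqref{E:suffIP} as Carleson's strong-separation condition for $\{\psi(w_j)\}\subset\mathbb{D}$, invoke Result~1 to interpolate on the disc, and then pull back the interpolant by composing with $\psi$. Where you diverge is in how you verify the contractive substitution $g\in H^{\infty}(\mathbb{D})\Rightarrow g\circ\psi\in H^{\infty}_{\Psi}(\mathbb{C})$ with $\|g\circ\psi\|_{\Psi}\le\|g\|_{\infty}$. You go through the full realization machinery of Result~4: take a disc realization of $h=g/\|g\|_\infty$, build the $*$-representation $\rho(\eta)=\eta(\psi)I_{\HH}$ on $\mathcal{C}_b(\Psi)$, observe $\rho(E(x))=\psi(x)I_{\HH}$, and recognize $h\circ\psi$ as the transfer function of the resulting $\Psi$-unitary colligation. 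The paper instead argues directly at the level of kernels: it uses the classical Schur-class factorization $1-h(z)\overline{h(w)}=(1-z\bar w)\Gamma(z,w)$, substitutes $z\mapsto\psi(z)$, and notes that for any $\Psi$-admissible $k$ the product $(1-\psi(z)\overline{\psi(w)})\,k(z,w)\,\Gamma(\psi(z),\psi(w))$ is positive by the Schur product theorem, since the first two factors combine to a positive kernel precisely because $\psi\in\Psi$ and $k\in\mathcal{K}_\Psi$. Both arguments are valid and encode the same underlying fact (Result~4's equivalence of (2) and (3)); the paper's route is shorter and more elementary, needing only the Agler decomposition of a disc Schur function and Schur multiplication, while your route makes the colligation structure explicit at the cost of invoking the heavier realization theorem twice. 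Your construction of $\rho$ as pullback along point-evaluation at $\psi$ is the conceptually attractive feature of the alternative, and it is entirely sound.
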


\begin{proof}
It is not very difficult to see that the condition \eqref{E:suffIP} above is equivalent to the condition that the sequence $\{z_{\psi,j}\}$ in $\mathbb{D}$ is strongly separated. Hence by Result~1 of Carleson, we know that
the sequence $\{z_{\psi,j}\}$ is interpolating for $H^{\infty}(\mathbb{D})$. Now given an arbitrary sequence 
$\{c_j:j\in\nat\}\in l^{\infty}(\nat)$, choose $f\in H^{\infty}(\mathbb{D})$ such that $f(z_{\psi,j})=c_j$ for all $j$.
\smallskip 

\noindent{\bf Claim.} $f\circ\psi\in H^{\infty}_\Psi(\mathbb{C})$.
\smallskip

\noindent{To see the claim above, assume without loss of generality that $||f||_{\infty}\leq 1$. Then there exists 
a positive kernel $\Gamma$ on $\D$ such that }
\[
1-f(z)\,\overline{f(w)}\,=\,(1-z\,\bar{w})\,\Gamma(z,\,w).
\]
Now let $k\in\mathcal{K}_{\Psi}$ be given then 
\begin{align*}
 \big(1-f(\psi(z))\overline{f(\psi(w))}\big)\,k(z,w)\,&=
  \,(1-\psi(z)\,\overline{\psi(w)})\,\Gamma(\psi(z),\,\psi(w))\,k(z,\,w)\\
  \,&=\,(1-\psi(z)\,\overline{\psi(w)})\,k(z,\,w)\,\Gamma(\psi(z),\,\psi(w)).
 \end{align*}
Note that $(1-\psi(z)\,\overline{\psi(w)})\,k(z,\,w)$ is positive from the definition that $k$ is a $\Psi$ admissible kernel. Of course $\Gamma(\psi(z),\,\psi(w))$ is positive and hence it follows from this that 
$\big(1-f(\psi(z))\overline{f(\psi(w))}\big)\,k(z,w)$ is positive for every $\Psi$-admissible kernel $k$ from which
the claim follows.
\smallskip

Now notice that $f\circ\psi(w_j)=c_j$ for all $j$ and since $\{c_j:j\in\nat\}$ is 
arbitrary, we are done.
\end{proof}

\section{Examples}\label{examples}
In this section, we present several cases where Theorem~\ref{T:maint} could be applied to give a 
characterization of interpolating sequences. A few of the cases that
appear below have already been addressed in the literature.

\begin{enumerate}

\item{\bf The case of Polydisc.} This is the case when $\Omega=\mathbb{D}^n,\ n\geq 1$. The class of test functions that we consider in this case is $\Psi\,=\,\{z_1,\dots,z_n\}$. The case $n=1$ has already been discussed 
after the statement of Theorem~\ref{T:maint}.
\smallskip

In the case $n=2$, due to Ando's inequality, we know that 
$H^{\infty}_{\Psi}(\mathbb{C)}\,=\,H^{\infty}(\mathbb{D}^2):=$ the set of all bounded holomorphic functions on the bidisc, 
with the norm $||\bcdot||_{\Psi}$ being 
sup-norm. Therefore, Theorem~\ref{T:maint} provides a characterization of interpolating sequences in the bidisc
for the Banach algebra of bounded holomorphic functions with the sup-norm. This case was already considered by Agler--McCarthy in \cite{A-M-2}. 
\smallskip

In general, when $n\geq 3$, the Banach algebra $H^{\infty}_{\Psi}(\mathbb{C})$ does not coincide with the algebra of bounded holomorphic functions on $\mathbb{D}^n$; see e.g. \cite{Var74}. One can still apply Theorem~\ref{T:maint} to give a characterization of interpolating sequences in the polydisc for the algebra
$H^{\infty}_{\Psi}(\mathbb{C})$ with $\Psi\,=\,\{z_1,\dots,z_n\}$. 
\smallskip

\item{\bf The case of multiply connected planar domains.} Let $\Omega$ be a bounded domain in the complex 
plane with boundary consisting of $m+1$ disjoint smooth Jordan curves $\partial_0,\partial_1,\dots,\partial_m$
where $\partial_0$ denotes the boundary of the unbounded component of the complement of $\Omega$. Then
there exists a collection of test functions $\Psi\,=\,\{\psi_{\bf x}\,:\,{\bf x}\in\mathbb{T}_{\Omega}\}$, indexed by the so-called 
$\Omega$-torus $\mathbb{T}_{\Omega}\,:=\,\partial_0\times\partial_1\times\dots\times\partial_m$
 (see \cite[Section~4.1]{B_H} and \cite{D-M07}), such that $H^{\infty}_{\Psi}(\mathbb{C})$ is equal to the set of all bounded
holomorphic functions on $\Omega$ and the norm $||\bcdot||_{\Psi}$ being equal to the sup-norm.
Using this class of test functions, Theorem~\ref{T:maint} can be applied to characterize interpolating sequences for bounded
holomorphic functions in $\Omega$. 
\smallskip

\item{\bf The case of constrained algebras.} Let us denote by $A(\mathbb{D})$ the Banach algebra of holomorphic functions on $\mathbb{D}$
that are continuous upto $\overline{\mathbb{D}}$. The algebra $A(\mathbb{D})$ is called {\em the disk algebra}.
Let $B$ be a finite Blaschke product of degree $N\geq 2$ and consider the algebra $\mathcal{A}_B\,:=\,\mathbb{C}+B(z)\,A(\mathbb{D})$.
Let us denote by $H^{\infty}_B$ the weak-closure 
of $\mathcal{A}_B$. In \cite{D-U}, a minimal class of test functions has been constructed for the algebras 
$H^{\infty}_B$. Therefore, one could use Theorem~\ref{T:maint} to characterize interpolating sequences in
$\mathbb{D}$ for the constrained algebras $H^{\infty}_B$ using this class of test functions. 
\smallskip

\item{\bf The case of symmetrized bidisc.} In this case $\Omega\,=\,\mathbb{G}_2$ where $\mathbb{G}_2$ is the symmetrized bidisc
defined by $\mathbb{G}_2\,:=\,\pi_2(\mathbb{D}^2)$ where $\pi_2:\mathbb{C}^2\lrarw
\mathbb{C}^2$ is the symmetrization map defined by $\pi_2(z_1,\,z_2)\,:=\,(z_1+z_2,\,z_1z_2)$. A point in 
$\mathbb{G}_2$ is also denoted by a pair $(s,\,p)\in\mathbb{C}^2$. It is a fact due to Agler--Young (see e.g. \cite{B-S}) that
a point $(s,\,p)\in\mathbb{C}^2$ belongs to $\mathbb{G}^2$ if and only if for every
$\alpha\in\overline{\mathbb{D}}$ we have
\[
 \intf{}{}{2\alpha p-s}{2-\alpha s} \,\in\, \mathbb{D}.
 \]
Because of this one could consider the family $\Psi\,:=\{\psi_{\alpha}\,:\,\psi_{\alpha}(s,\,p)=(2\alpha p-s)/(2-
\alpha s)\,:\,\alpha\in\overline{\mathbb{D}}\}$, as a family of test functions for $\mathbb{G}_2$. Then it is a fact 
(see \cite{B-S}) that $H^{\infty}_{\Psi}(\mathbb{C})\,=\,H^{\infty}(\mathbb{G}_2)$, the set of bounded holomorphic functions
on $\mathbb{G}_2$ and the norm $||\bcdot||_{\Psi}$ being equal to sup-norm. Hence one could apply Theorem~\ref{T:maint} to
characterize interpolating sequences for $H^{\infty}(\mathbb{G}_2)$. This case too has been dealt and is due to Bhattacharyya--Sau 
\cite{B-S-1}. 
\end{enumerate}

\vspace{0.1in} \noindent\textbf{Acknowledgement:}
The first named author's research is supported by the University Grants Commission Centre for Advanced Studies. 
The second named author's research is supported by a postdoctoral fellowship at Harish-Chandra Research
Institute, Prayagraj (Allahabad).

\end{document}